\documentclass[reqno]{amsart}
\usepackage{amssymb,latexsym}
\usepackage{amsfonts,mathrsfs}
\usepackage[margin=1.4in]{geometry}
\usepackage{amsmath}
\usepackage{enumerate}
\usepackage{mathtools}
\usepackage{tikz}
\usetikzlibrary{arrows.meta}

\newtheorem{thm}{Theorem}[section]
\newtheorem{lemma}[thm]{Lemma}

\newtheorem{definition}[thm]{Definition}

\theoremstyle{remark}
\newtheorem{remark}{Remark}[section]

\newcommand{\Vol}{\mathrm{Vol}}

\begin{document}
\bibliographystyle{abbrv}

\title{Scalar curvature lower bounds on asymptotically flat manifolds}
\author{Yuqiao Li}
\address{Department of Mathematics, Hefei University of Technology, Hefei, 230009, P.R.China.} 
\email{lyq112@mail.ustc.edu.cn}
\thanks{This paper is supported by Initial Scientific Research Fund of Young Teachers in Hefei university of Technology of No.JZ2024HGQA0122}

\begin{abstract}
In this paper, we consider the scalar curvature in the distributional sense of \cite{MR3366052}  and the scalar curvature lower bound in the $\beta-$weak $(\beta\in(0, \frac{1}{2}))$ sense of \cite{MR4685089} on an asymptotically flat $n-$manifold with a $W^{1,p}(p>n)$ metric.
We first show that the scalar curvature lower bound under the Ricci-DeTurck flow depends on the scalar curvature lower bound in the $\beta-$weak sense and the time.
Then we prove that the lower bound of the distributional scalar curvature of a $W^{1, p}$ metric coincides with the lower bound of the scalar curvature in the $\beta-$weak sense at infinity.
\end{abstract}

\maketitle

\numberwithin{equation}{section}

\section{Introduction}

Defining the scalar curvature in weak sense on Riemannian manifolds with non-smooth metrics is of great meaning in many geometric problems.
Lee-LeFloch have defined the distributional scalar curvature of $W^{1,p}(p\geq n)$ metrics on asymptotically flat spin manifolds to study the positive mass theorem in low regularity \cite{MR3366052}.
For $C^0$ metrics, Gromov introduced a definition of the scalar curvature lower bound \cite{MR3201312}.
Since the scalar curvature lower bound is preserved by the Ricci flow of a smooth initial metric,
Burkhardt-Guim has defined the pointwise lower scalar curvature bounds for $C^0$ metrics on compact manifolds by Ricci flow \cite{MR4034918}.
For these two definitions of the scalar curvature lower bounds, Jiang-Sheng-Zhang proved that the distributional scalar curvature lower bound on a compact manifold is preserved along the Ricci flow for $W^{1,p}(p>n)$ initial metrics \cite{MR4596043}, which implies that the scalar curvature has lower bound in the sense of Burkhardt-Guim \cite{MR4034918}.

Recently, Burkhardt-Guim also defined the scalar curvature lower bound in the $\beta-$weak sense and the ADM mass on an asymptotically flat manifold with a $C^0$ metric \cite{MR4685089}.
This is an extension of the definition of the scalar curvature lower bound introduced in \cite{MR4034918} to noncompact manifolds.

In this paper, we consider the scalar curvature lower bound in the distributional sense of \cite{MR3366052} and in the $\beta-$weak sense of \cite{MR4685089} on an asymptotically flat manifold with a $W^{1,p}(p>n)$ metric.
In another word, we consider the result of Jiang-Sheng-Zhang in \cite{MR4596043} on asymptotically flat manifolds.

Firstly, we show the scalar curvature lower bound in the $\beta-$weak sense is almost preserved under the Ricci flow by the approach of Bamler \cite{MR3512888}.

\begin{thm}\label{Rt01}
  Let $M^n$ be a smooth manifold with a $C^0-$asymptotically flat metric $g$ satisfying \eqref{AF}.
 For some $\beta\in(0, \frac{1}{2})$, suppose $g$ has scalar curvature bounded below by $\kappa\in\mathbb{R}$ in the sense of Ricci flow on $M\backslash K$, where $K$ is a compact set (See Definition \ref{scalar} in Section 2).
  Suppose $g_0$ is a $C^0$ metric on $\mathbb{R}^n$ constructed in Section 2 that agrees with $\Phi_*g$ on $B_{\delta}(x, \frac{1}{2^{\beta}-1}t^{\beta})$, for $x\in\mathbb{R}^n\backslash\overline{B(0, 1)}$, $t>0$, where $\Phi$ is a coordinate chart defined as in Definition \ref{scalar}.
  Let $(g_t)_{t>0}$ be the Ricci-DeTurck flow for $g_0$ on $\mathbb{R}^n$ with \eqref{gt}\eqref{dgt}.
  Then we have
  \begin{equation}\label{lb}
    R(x, t)\geq\kappa-Ct^{\lambda},
  \end{equation}
  where $C=C(n)$ is a constant and $\lambda>0$.
\end{thm}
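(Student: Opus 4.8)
The plan is to estimate the evolution of the scalar curvature under the Ricci-DeTurck flow by combining a local bound on how much the flow deforms the metric (coming from the parabolic smoothing estimates of Bamler) with the hypothesis that $g$ has scalar curvature bounded below by $\kappa$ in the $\beta$-weak sense on $M\backslash K$. The $\beta$-weak lower bound, by Definition \ref{scalar}, says essentially that when we run the Ricci-DeTurck flow starting from the localized $C^0$ metric $g_0$, the scalar curvature of the flow at a point $x$ and time $t$ is at least $\kappa$ up to an error that decays as $t\to 0$; the radius $\frac{1}{2^\beta-1}t^\beta$ of the ball on which $g_0$ agrees with $\Phi_*g$ is exactly the scale dictated by that definition. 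So the task is to quantify this error as a concrete power $Ct^\lambda$.

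**First I would** set up the Ricci-DeTurck flow $(g_t)_{t>0}$ with the background metric being the Euclidean metric (or the smoothed asymptotically flat model), and recall Bamler's a priori estimates: for a $C^0$ initial metric that is $\varepsilon$-close to the flat metric, the flow exists on a short time interval and satisfies interior derivative bounds of the form $\lvert \partial^k g_t\rvert \lesssim \varepsilon\, t^{-k/2}$, together with a Hölder-in-time control $\lvert g_t - g_0\rvert \lesssim \varepsilon\, t^{\alpha}$ on the region where $g_0$ is defined. These are the tools that let one pass from $C^0$ data to pointwise curvature statements for $t>0$. The point of choosing $g_0$ to coincide with $\Phi_*g$ only on the ball $B_\delta(x,\frac{1}{2^\beta-1}t^\beta)$ is that, by finite-speed-of-propagation-type estimates for the flow, the value of $g_t$ at the center $x$ at time $t$ is insensitive (up to an exponentially small or higher-order error) to the modification of the initial data outside that ball, so that $R(x,t)$ computed from this global $g_0$ matches the localized construction used to define the $\beta$-weak bound.

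**The key step** is then to compare $R(x,t)$ with $\kappa$. I would invoke the $\beta$-weak lower bound hypothesis on $M\backslash K$: for the localized flow associated to each such ball, the definition provides $R(x,t)\geq \kappa - o(1)$ as $t\to 0$, and I would upgrade this $o(1)$ to an explicit rate $Ct^\lambda$ by tracking the Hölder exponent $\alpha$ from Bamler's estimates through the scalar curvature, which is a second-order quantity in $g_t$. Concretely, writing the scalar curvature as a contraction of $g_t^{-1}$ with second derivatives of $g_t$ and lower-order quadratic terms in the first derivatives, the smoothing estimates give a lower bound on $R(x,t)$ governed by the initial modulus of continuity of $g_0$ near $x$; the asymptotic flatness condition \eqref{AF} controls this modulus uniformly for $x\in\mathbb{R}^n\backslash\overline{B(0,1)}$, yielding a uniform constant $C=C(n)$ and an exponent $\lambda>0$ depending on $\beta$ and $n$.

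**The main obstacle** I expect is controlling the interaction between the two scales: the ball radius $\frac{1}{2^\beta-1}t^\beta$ shrinks as $t\to 0$, while the natural parabolic scale of the flow is $\sqrt{t}$, and since $\beta<\frac{1}{2}$ the ball radius $t^\beta$ dominates $\sqrt t$, which is precisely what guarantees that the relevant region stays inside where $g_0=\Phi_*g$ and that the localization error is negligible. Making this quantitative — showing that the discrepancy between the globally-defined flow $g_t$ and the idealized localized flow contributes at most a higher power of $t$ than $t^\lambda$, so that it can be absorbed — is the delicate point, and I would handle it by a cutoff/comparison argument using the maximum principle applied to the difference of the two flows, together with the uniform asymptotic flatness bounds to ensure the constants do not degenerate as $x\to\infty$.
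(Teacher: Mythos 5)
There is a genuine gap: your proposal never identifies the actual mechanism that converts the $\beta$-weak hypothesis into the quantitative bound $R(x,t)\geq\kappa-Ct^{\lambda}$, and the substitute you offer would not work. You propose to ``upgrade the $o(1)$ to an explicit rate by tracking the H\"older exponent from Bamler's estimates through the scalar curvature,'' writing $R$ as a contraction of $g_t^{-1}$ with second derivatives of $g_t$ and controlling it by the modulus of continuity of $g_0$. But the smoothing estimates \eqref{dgt} only give $|\partial^2 g_t|\lesssim \epsilon t^{-1}$, hence $|R(g_t)|\leq C_1(n)/t$; this blows up as $t\to0$ and carries no sign information, so no lower bound of the form $\kappa-Ct^{\lambda}$ can be extracted from the regularity of $g_0$ alone. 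Indeed, a merely continuous metric has no pointwise curvature, so any argument that tries to read the lower bound off the initial modulus of continuity is structurally doomed; the lower bound must come from the \emph{evolution} of $R$.

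The paper's proof (following Lemma 4 of Bamler \cite{MR3512888}) uses the evolution equation $\partial_tR=\Delta R-\partial_{X}R+2|Ric|^2$ to get the heat-kernel representation inequality $R(x,t)\geq\int\Phi^{RD}(x,t;y,s)R(y,s)\,dg_s(y)$, and then iterates this backwards over the dyadic times $t/2^k$: at each step one splits the integral into a ball $B(x_{k-1},r_k)$ with $r_k=(t/2^k)^{\beta}$ and its complement, bounds the exterior contribution by the Gaussian off-ball decay of the heat kernel together with $|R|\leq C_1 2^k/t$, and replaces the interior contribution by $a_k=\inf_{B(x_{k-1},r_k)}R(\cdot,t/2^k)$. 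The radius $\frac{1}{2^{\beta}-1}t^{\beta}$ in the statement is exactly $\sum_{i\geq1}(t/2^i)^{\beta}$, the total drift of the centers $x_k$, which is why all the infima are taken inside a single ball where $g_0=\Phi_*g$ and the $\beta$-weak definition applies to $\lim_k a_k$; the error series $\sum_i(2^i/t)\exp(-(t/2^i)^{2\beta-1}/D)$ converges to $C_3t^{\lambda}$ precisely because $\beta<\tfrac12$. Your remarks about the scales $t^{\beta}$ versus $\sqrt{t}$ and about localization gesture at the right tension, but without the dyadic iteration and the heat-kernel mass estimate \eqref{kernel} there is no route from the liminf-type hypothesis \eqref{C0scalar1} (which is qualitative and only active as $t\to0$) to a bound at a fixed positive time $t$ with a uniform constant $C(n)$ and explicit exponent $\lambda=-1-(2\beta-1)\gamma$.
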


 From the above theorem, we see that the scalar curvature along the Ricci-DeTurck flow at positive time at a point is bounded below by the scalar curvature lower bound in the sense of Ricci flow on a neighborhood of the point.
 
 Then we prove that for a $W^{1, p} (p>n)$ metric on an asymptotically flat $n-$manifold, the distributional scalar curvature lower bound coincides with the scalar curvature lower bound in the sense of Ricci flow at infinity.
 Since the distributional scalar curvature is not pointwise, we can only show the theorem at infinity.

 \begin{thm}\label{dfr}
    Let $M^n$ be an asymptotically flat manifold with a metric $g\in C^0\bigcap W^{1, p}_{-\tau}$, $p>n$, $\tau>\frac{n-2}{2}$. If the scalar curvature distribution $\ll R_g, u\gg\geq\kappa$, for any compactly supported nonnegative smooth function $u$ and some constant $\kappa\geq0$, then for some $\beta\in(0, \frac{1}{2})$, $g$ has scalar curvature bounded below by $\kappa$ in the sense of Ricci flow at infinity.
\end{thm}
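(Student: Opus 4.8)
The plan is to pass from the integrated (distributional) lower bound to the pointwise-in-the-limit lower bound that defines the Ricci-flow sense, by running the Ricci-DeTurck flow and combining the preservation of the distributional bound with a short-time localization. First I would fix a point $x$ in the asymptotically flat chart outside a large compact set and work with the pushforward $\Phi_*g$, which is again $C^0\cap W^{1,p}_{-\tau}$. Following the construction of Section 2, I form the comparison metric $g_0$ on $\mathbb{R}^n$ that coincides with $\Phi_*g$ on the ball $B\big(x,\tfrac{1}{2^\beta-1}t^\beta\big)$ and is extended to a globally controlled (e.g.\ Euclidean near infinity) $C^0$ metric, and I let $(g_t)_{t>0}$ be its Ricci-DeTurck flow as in Theorem \ref{Rt01}. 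Since $g_0=\Phi_*g$ on that ball and the scalar curvature distribution of $g$ satisfies $\ll R_g,u\gg\geq\kappa$ against every nonnegative compactly supported test function, restricting to test functions supported in $B\big(x,\tfrac{1}{2^\beta-1}t^\beta\big)$ yields a distributional lower bound $\ll R_{g_0},u\gg\geq\kappa$ on that ball.

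Next I would invoke the preservation of the distributional scalar-curvature lower bound along the Ricci-DeTurck flow for $W^{1,p}$ ($p>n$) initial data, following the mechanism of Jiang-Sheng-Zhang \cite{MR4596043}. Concretely, I mollify $g_0$ to smooth metrics $g_0^\epsilon\to g_0$ in $W^{1,p}_{\mathrm{loc}}$; the classical maximum principle applied to the scalar-curvature evolution $\partial_t R=\Delta R+2|\mathrm{Ric}|^2$ (Ricci-DeTurck differs from Ricci flow only by a time-dependent diffeomorphism, under which the lower bound is invariant) preserves $R\geq\kappa$ along the flow of $g_0^\epsilon$ once the mollified scalar curvatures respect the bound in the limit, and the Koch-Lamm stability estimates let me send $\epsilon\to0$ to obtain $R(g_t)\geq\kappa$ on the region of interest for each small $t>0$. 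The hypothesis $\kappa\geq0$ keeps the sign compatible with the asymptotically flat decay ($\tau>\tfrac{n-2}{2}$), so no competition with the behavior at infinity arises.

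The remaining point, and the place where $\beta<\tfrac12$ is essential, is to remove the dependence on the artificial extension of $g_0$ outside the ball. I would use the finite-speed, off-diagonal heat-kernel localization in the spirit of Bamler \cite{MR3512888}: the parabolic domain of dependence of $(x,t)$ has radius comparable to $t^{1/2}$, which for $\beta<\tfrac12$ is much smaller than the radius $\tfrac{1}{2^\beta-1}t^\beta$ of the ball on which $g_0$ agrees with $\Phi_*g$. Hence the contribution to $R(x,t)$ coming from the transition region where $g_0\neq\Phi_*g$ is controlled by off-diagonal heat-kernel decay and vanishes as $t\to0$ faster than any power, so it cannot affect the limit. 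Combining this with the preserved lower bound gives $\liminf_{t\to0^+}\big(R(x,t)-\kappa\big)\geq0$, which is precisely the statement that $g$ has scalar curvature bounded below by $\kappa$ in the sense of Ricci flow at $x$ (Definition \ref{scalar}); letting $x$ range over the complement of a compact set yields the bound at infinity.

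I expect the main obstacle to be the second and third steps together: transferring the compact, global preservation argument of Jiang-Sheng-Zhang to the locally modified metric $g_0$ without the lower bound being degraded by the transition region, and making rigorous the interchange of the mollification limit $\epsilon\to0$ with the short-time limit $t\to0$. The subtlety is that the distributional hypothesis is non-pointwise while the target is a limit of essentially pointwise bounds, so I must rely on the smoothing of the flow to upgrade the bound while simultaneously keeping the localization error negligible; controlling both limits uniformly is the technical heart of the argument.
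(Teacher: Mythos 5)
Your first step (locality of the distributional scalar curvature, so that $\ll R_{g_0},u\gg\;\geq\kappa$ for test functions supported where $g_0=\Phi_*g$) is fine, but the core of your argument has a genuine gap. You propose to mollify $g_0$ to smooth metrics $g_0^\epsilon$ and then propagate $R\geq\kappa$ by the maximum principle applied to $\partial_tR=\Delta R+2|Ric|^2$. The maximum principle needs a \emph{pointwise} lower bound on the initial scalar curvature, and mollification of a $W^{1,p}$ metric whose scalar curvature is $\geq\kappa$ only distributionally does \emph{not} produce metrics with $R(g_0^\epsilon)\geq\kappa$ pointwise (nor $\geq\kappa-o(1)$): the scalar curvature involves $\partial^2g$ and $(\partial g)^2$, and neither commutes with mollification. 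This is exactly the difficulty that Jiang--Sheng--Zhang, and the paper here, circumvent by working with integral quantities instead of the maximum principle. The paper's actual route is: Lemma \ref{g00} to control $\ll R_{g_0},\phi\gg$ up to an error $c_3\epsilon=O(r^{-\tau})$ from the gluing region; then the energy $E(t)=\int (R-\kappa)_-\,\varphi_t\psi_t\,d\mu_t$, where $\varphi_t$ solves the conjugate heat equation backward from a bump on $B_\delta(x_0,CT^\beta)$ and $\psi_t$ is a Perelman-type distance cutoff; a differential inequality for $E(t)$ whose error terms are killed by a Davies double-integral heat-kernel bound; and the identification $\lim_{t\to0}E(t)=\ll(R_{g_0}-\kappa)_-,u\gg\leq c_3\epsilon$ via the $W^{1,n}_{loc}$ convergence $g(t)\to g_0$. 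Your "off-diagonal heat-kernel localization" remark is in the right spirit for the error terms of that differential inequality, but without the integral framework there is nothing for it to attach to.

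A second, related problem: your conclusion $\liminf_{t\to0}(R(x,t)-\kappa)\geq0$ at each fixed $x$ outside a compact set is stronger than the theorem and than what the method can deliver. The test functions one is forced to use are the backward conjugate heat flows $\varphi_t$ of a bump at scale $CT^\beta$; as $t\to0$ their (cut-off) supports grow like $t^{-\gamma}$ and unavoidably meet the transition region where $g_0\neq\Phi_*g$, so the initial distributional bound one can actually use is $\kappa-O(r^{-\tau})$, not $\kappa$. This error does not decay in $t$; it only vanishes as $r=|x_0|_\delta\to\infty$, which is precisely why the paper couples $t=r^{-\lambda}$ and obtains the bound only \emph{at infinity} (see the Remark following the proof). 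Your proposal assumes the localization can be made error-free at a fixed point, which is the step that fails.
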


This paper is organized as follows. In Section 2, we introduce necessary definitions of the scalar curvaure lower bound. We prove Theorem \ref{Rt01} and Theorem \ref{dfr} in Section 3 and 4 respectively.

\section{Pointwise scalar curvature lower bound of $C^0$ metrics}

In \cite{MR4034918}, Burkhardt-Guim defined the pointwise scalar curvature lower bounds on compact manifolds with $C^0$ metrics via regularizing Ricci flow (see Theorem 1.1 of \cite{MR4034918}) which is preserved under the Ricci flow.
The idea comes from the property that the scalar curvature lower bounds of smooth metrics are preserved by the Ricci flow.

\begin{definition}\label{cpt}(Definition 1.2 of \cite{MR4034918})
  Let $M$ be a closed manifold and $g_0$ be a $C^0$ metric on $M$.
  For some $\beta\in(0, \frac{1}{2})$, we say $g_0$ has scalar curvature bounded below by $\kappa$ at $x$ in the $\beta$-weak sense if there exists a regularizing Ricci flow $(g(t), \chi)$  with $g_0$ as the initial data, where $\chi: M\rightarrow M$ is a surjection, such that for any $y\in M$ with $\chi(y)=x$, we have
  \begin{equation}\label{C0scalar}
    \inf_{C>0}\left(\liminf_{t\rightarrow0}\left(\inf_{B_{g(t)}(y, Ct^{\beta})}R(g(t))\right)\right)\geq\kappa,
  \end{equation}
  where $R(g(t))$ is the scalar curvature of the regularizing Ricci flow $g(t)$ which is smooth for positive time.
\end{definition}

For simplicity, we use the notation $R_{C^0_{\beta}}(g_0)(x)\geq\kappa$ to mean $g_0$ has scalar curvature bounded below by $\kappa$ at $x$ in the $\beta$-weak sense satisfying \eqref{C0scalar}.
Burkhardt-Guim also showed that the scalar curvature lower bound in the $\beta$-weak sense is preserved under the Ricci flow(Theorem 1.5 in \cite{MR4034918}).
Except for the definition of the scalar curvature lower bound in some weak sense for $C^0$ metrics by Gromov(\cite{MR3201312}), Burkhardt-Guim gave three more definitions of the scalar curvature lower bound, which are proven to be equivalent.

In order to investigate the mass of an asymptotically flat manifold with a $C^0$-asymptotically flat metric, Burkhardt-Guim generalized the definition of the scalar curvature lower bound in the $\beta$-weak sense (Definition \ref{cpt}) to noncompact manifolds in the following way (\cite{MR4685089}).

\begin{definition}\label{scalar}(Definition 2.3 of \cite{MR4685089})
  Let $M^n$ be a smooth manifold and $g$ be a continuous metric on $M$.
  For some $\beta\in(0, \frac{1}{2})$, we say that $g$ has scalar curvature bounded below by $\kappa\in\mathbb{R}$ at $x\in M$ in the $\beta$-weak sense if there exists a coordinate chart
  $\Phi: \mathcal{U}_x\rightarrow\Phi(\mathcal{U}_x)$ on $M$, where $\mathcal{U}_x\subset M$ is a neighborhood of $x$ and $\Phi(\mathcal{U}_x)\subset\mathbb{R}^n$ is an open subset of $\mathbb{R}^n$, and there exists a continuous metric $g_0$ on $\mathbb{R}^n$ and a Ricci-DeTurck flow $(g_t)_{t\in(0, T]}$ for $g_0$ with respect to the Euclidean background metric, such that
  \[ g_0\big|_{\Phi(\mathcal{U}_x)}=\Phi_*g, \]
  \begin{equation}\label{C0scalar1}
    \inf_{C>0}\left(\liminf_{t\rightarrow0}\left(\inf_{B_{\delta}(\Phi(x), Ct^{\beta})}R(g_t)\right)\right)\geq\kappa.
  \end{equation}
\end{definition}

We also denote the above definition of the scalar curvature lower bound by $R_{C^0_{\beta}}(g)(x)\geq\kappa$.
We say that $g$ has scalar curvature bounded below by $\kappa\in\mathbb{R}$ in the sense of Ricci flow on an open region $U\subset M$ if for some $\beta\in(0, \frac{1}{2})$, all $x\in U$, $g$ has scalar curvature bounded below by $\kappa$ in the $\beta-$weak sense at $x$.
The idea of extending the definition of the scalar curvature lower bound in the $\beta-$weak sense on closed manifolds (Definition \ref{cpt}) to an asymptotically flat manifold is based on the existence of the Ricci-DeTurck flow starting from a $L^{\infty}$ metric close to the standard Euclidean metric $\delta$, which is given by Koch-Lamm in \cite{MR2916362}.

\begin{definition}
 The Ricci-DeTurck flow on $\mathbb{R}^n$ for a metric $g_0$ with respect to the Euclidean background metric is a family of metrics $g(t)$ satisfying
\begin{equation}
\begin{aligned}\label{RD}
  \partial_tg(t)&=-2Ric(g(t))-\mathcal{L}_{X_{\delta}(g(t))}g(t) \text{\ \ in\ \ } \mathbb{R}^n\times(0, T) \\
  g(\cdot, 0)&=g_0,
\end{aligned}
\end{equation}
where
\[ X_{\delta}(g)=-g^{ij}\Gamma^k_{ij}e_k \]
is a vector field and $\{e_k\}_{k=1}^n$ is an orthonormal frame of $g(t)$, $\Gamma$ is the Christoffel symbol of $g(t)$.
\end{definition}

\begin{thm}\label{RDF}(Theorem 4.3 in \cite{MR2916362})
  Suppose $g_0\in L^{\infty}(\mathbb{R}^n)$ is a metric on $\mathbb{R}^n$ satisfying $||g_0-\delta||_{L^{\infty}(\mathbb{R}^n)}<\epsilon$, for some $\epsilon<1$.
  Then there exists a constant $c=c(n)>0$ and a global analytic solution $g(t)_{t>0}$ to the Ricci-DeTurck flow \eqref{RD} such that for $t>0$,
  \[ \lim_{t\rightarrow0} g(t)=g_0, \]
  \begin{equation}\label{gt}
    ||g(t)-\delta||_X\leq c(n)||g_0-\delta||_{L^{\infty}(\mathbb{R}^n)},
  \end{equation}
  where the norm $||\cdot||_X$ is defined by
  \begin{equation}\label{hx}
  \begin{split}
    ||h||_X=&\sup_{t>0}||h(t)||_{L^{\infty}(\mathbb{R}^n)}\\
    &+\sup_{x\in\mathbb{R}^n}\sup_{r>0}
    \left( r^{-n/2}||\nabla h||_{L^2(B(x,r)\times(0, r^2))}+r^{\frac{2}{n+4}}||\nabla h||_{L^{n+4}(B(x, r)\times(\frac{r^2}{2}, r^2))} \right),
  \end{split}\end{equation}
  and, for all $k\in\mathbb{N}$, there exists $c_k(n)>0$ such that for all $t>0$,
  \begin{equation}\label{dgt}
    ||\nabla^k(g(t)-\delta)||_{L^{\infty}(\mathbb{R}^n)}\leq c_k(n)||g_0-\delta||_{L^{\infty}(\mathbb{R}^n)}t^{-\frac{k}{2}}.
  \end{equation}
\end{thm}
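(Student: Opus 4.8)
The plan is to treat the statement as the Koch--Lamm existence and regularity result and to reproduce its proof by a fixed-point argument in the scale-invariant space defined by $\|\cdot\|_X$. First I would rewrite the Ricci-DeTurck flow \eqref{RD} as a quasilinear parabolic equation for the difference $h(t)=g(t)-\delta$. A direct computation in the flat background coordinates shows that \eqref{RD} takes the form
\begin{equation}\label{hflow}
  \partial_t h = \Delta h + \mathcal{N}(h),
\end{equation}
where $\Delta$ is the flat Laplacian and the nonlinearity has the schematic structure $\mathcal{N}(h) = (g^{-1}-\delta)\ast\partial^2 h + g^{-1}\ast\partial h\ast\partial h$, i.e.\ a second-order term carrying the \emph{small} coefficient $g^{-1}-\delta\approx -h$ together with a term quadratic in $\partial h$. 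Integrating by parts to move one derivative off the leading term, $\mathcal{N}(h)$ can be written as $\partial\,\mathcal{Q}_1(h,\partial h) + \mathcal{Q}_2(\partial h)$ with $\mathcal{Q}_1\sim h\ast\partial h$ and $\mathcal{Q}_2\sim\partial h\ast\partial h$ both quadratic, so that the Duhamel representation reads
\begin{equation}\label{duh}
  h(t) = e^{t\Delta}h_0 + \int_0^t e^{(t-s)\Delta}\bigl(\partial\,\mathcal{Q}_1 + \mathcal{Q}_2\bigr)(s)\,ds =: \mathcal{T}(h)(t).
\end{equation}

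The key observation is that the norm $\|\cdot\|_X$ in \eqref{hx} is invariant under the parabolic rescaling $h(x,t)\mapsto h(\lambda x,\lambda^2 t)$, which matches the natural scaling of \eqref{hflow}; this is what allows the fixed-point scheme to close in a critical space. I would then establish two families of estimates. The \emph{linear estimate} asserts $\|e^{t\Delta}h_0\|_X \leq c(n)\|h_0\|_{L^\infty(\mathbb{R}^n)}$: the $\sup_t\|\cdot\|_{L^\infty}$ part is immediate from the maximum principle, while the weighted $L^2$ and $L^{n+4}$ bounds on $\nabla e^{t\Delta}h_0$ are Carleson-type estimates for the heat kernel acting on merely bounded data, in the spirit of Koch--Tataru-type estimates for critical spaces. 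The \emph{nonlinear estimate} asserts $\|\mathcal{T}(h)-e^{t\Delta}h_0\|_X\leq C\|h\|_X^2$ together with the Lipschitz bound $\|\mathcal{T}(h_1)-\mathcal{T}(h_2)\|_X\leq C(\|h_1\|_X+\|h_2\|_X)\|h_1-h_2\|_X$; here the derivative in front of $\mathcal{Q}_1$ is absorbed by the $e^{(t-s)\Delta}\partial$ smoothing, contributing the integrable factor $(t-s)^{-1/2}$, and the quadratic structure combined with the scale-invariance of $X$ is exactly what balances the Duhamel integral. On the ball $\{\|h\|_X\leq 2c(n)\|h_0\|_{L^\infty}\}$ these two bounds make $\mathcal{T}$ a contraction once $\|g_0-\delta\|_{L^\infty}<\epsilon$ is small enough, yielding a unique fixed point $h$ and hence the global solution $g(t)=\delta+h$ satisfying \eqref{gt}.

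To obtain the higher-derivative bounds \eqref{dgt}, I would invoke parabolic interior regularity together with the same scaling. Rescaling a unit parabolic cylinder to one of radius $t^{1/2}$, the $X$-smallness of $h$ feeds into fixed interior $L^p$/Schauder estimates for \eqref{hflow}, and undoing the rescaling produces the factor $t^{-k/2}$ in front of $\|g_0-\delta\|_{L^\infty}$; this is the parabolic analogue of the heat-kernel bound $\|\nabla^k e^{t\Delta}h_0\|_{L^\infty}\lesssim t^{-k/2}\|h_0\|_{L^\infty}$. Finally, analyticity of $g(t)$ follows from the smoothing estimates applied on a complex-time strip: controlling all space-time derivatives with the correct powers of $t$ gives Cauchy-type bounds $\|\partial_t^j\nabla^k h(t)\|_{L^\infty}\lesssim A^{j+k}(j+k)!\,t^{-j-k/2}\|h_0\|_{L^\infty}$, from which real-analyticity in $(x,t)$ is standard.

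I expect the main obstacle to be the \emph{linear estimate} in the critical space $X$ --- proving that the heat flow carries bounded data into the weighted parabolic $L^2$ and $L^{n+4}$ norms of $\nabla e^{t\Delta}h_0$ --- and, correspondingly, closing the quadratic estimate in this scale-invariant norm, where there is no slack to spare. Everything else (the maximum principle, the Duhamel bookkeeping, the interior regularity, and the Cauchy estimates for analyticity) is comparatively routine once $X$ has been shown to be simultaneously adapted to the heat semigroup and to the quadratic nonlinearity of \eqref{hflow}.
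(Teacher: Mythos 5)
This theorem appears in the paper without proof: it is quoted verbatim as Theorem 4.3 of Koch--Lamm \cite{MR2916362}, so the only meaningful comparison is with the original argument, and your outline is essentially a faithful reconstruction of it. Koch--Lamm do exactly what you propose: they write the flow \eqref{RD} for $h=g-\delta$ as a heat equation with a nonlinearity in divergence-plus-quadratic form $\partial\,\mathcal{Q}_1(h,\partial h)+\mathcal{Q}_2(h,\partial h)$ with $\mathcal{Q}_1\sim h\ast\partial h$ and $\mathcal{Q}_2\sim\partial h\ast\partial h$, prove the Carleson-type linear estimate $\|e^{t\Delta}h_0\|_X\leq c(n)\|h_0\|_{L^\infty}$ together with the matching quadratic and Lipschitz estimates for the Duhamel map in the scale-invariant norm \eqref{hx} (this is the Koch--Tataru scheme adapted from Navier--Stokes, and you correctly identify it as the analytic crux), run a contraction on a small ball of $X$, and obtain \eqref{dgt} from rescaled interior estimates. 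The one place you genuinely diverge is analyticity: Koch--Lamm do not prove Cauchy-type factorial bounds on a complex-time strip, but instead use the parameter trick (in the style of Angenent and Masuda), inserting space-time translation and scaling parameters into the fixed-point equation and invoking analytic dependence of the fixed point on those parameters; this buys analyticity essentially for free from the contraction-mapping setup, whereas your route would require a nontrivial induction to establish the bounds $\|\partial_t^j\nabla^k h(t)\|_{L^\infty}\lesssim A^{j+k}(j+k)!\,t^{-j-k/2}\|h_0\|_{L^\infty}$, which are not a direct consequence of the estimates already in hand. Two smaller points of precision: the contraction closes only for $\|g_0-\delta\|_{L^\infty}<\epsilon(n)$ with a dimensional smallness constant, so the ``$\epsilon<1$'' in the statement is too generous as written (an inaccuracy inherited from the paper's paraphrase of \cite{MR2916362}); and for merely $L^\infty$ data the attainment $\lim_{t\to 0}g(t)=g_0$ holds only in a weak sense (e.g.\ locally uniformly when $g_0$ is continuous, which is the form the paper actually uses), a point your fixed-point argument should record when verifying the initial condition. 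Modulo these caveats, and the fact that the two hard estimates are named rather than executed, your plan matches the source proof.
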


Now we consider the scalar curvature lower bound in the $\beta-$weak sense of an $n$-dimensional smooth manifold $M$ with a $C^0-$asymptotically flat metric $g$.
Namely, for some compact set $K\subset M$, there exists a diffeomorphism $\Phi: M\backslash K\rightarrow\mathbb{R}^n\backslash\overline{B(0,1)}$, such that,
for $|x|_{\delta}\geq r_0$, $x\in\mathbb{R}^n\backslash\overline{B(0,1)}$, $r_0>1$, we have
\begin{equation}\label{AF}
  |(\Phi_*g)_{ij}-\delta_{ij}|(x)=O(|x|_{\delta}^{-\tau}),
\end{equation}
where $\tau>\frac{n-2}{2}$.
For any $x\in M\backslash K$, there is a  neighborhood $\mathcal{U}_x\subset M\backslash K$ of $x$ and a diffeomorphism $\Phi: \mathcal{U}_x\rightarrow\Phi(\mathcal{U}_x)$, where $\Phi(\mathcal{U}_x)\subset\mathbb{R}^n\backslash\overline{B(0, 1)}$.
We can construct a metric $g_0\in C^0(\mathbb{R}^n)$ which is a continuous extension of $\Phi_*g$ on $\Phi(\mathcal{U}_x)$ to $\mathbb{R}^n$ by gluing the Euclidean metric $\delta$ on the complement of a neighborhood of $\Phi(\mathcal{U}_x)$.
In fact, there is an open set $V\subset\subset\mathbb{R}^n\backslash\overline{B(0, 1)}$ such that $\Phi(\mathcal{U}_x)\subset\subset V$ and a smooth cutoff function $\chi: \mathbb{R}^n\rightarrow [0, 1]$ with $\chi\equiv1$ on $\Phi(\mathcal{U}_x)$ and $Supp(\chi)\subset V$.
Let $g_0 = \chi\Phi_*g+(1-\chi)\delta$ be a continuous metric on $\mathbb{R}^n$.
Then
\[ g_0\big|_{\Phi(\mathcal{U}_x)}=\Phi_*g. \]
And we have
 \[ ||g_0-\delta||_{C^0(\mathbb{R}^n)}=||\chi(\Phi_*g-\delta)||_{C^0(V)}<\epsilon \]
 provided that $||\Phi_*g-\delta||_{C^0(\mathbb{R}^n)}<\epsilon$.
From \eqref{AF}, for $|x|_{\delta}\geq r_0$, we know $||\Phi_*g-\delta||_{C^0(\mathbb{R}^n\backslash\overline{B(0, r_0)})}<cr_0^{-\tau}=\epsilon<1$.
Thus, by Theorem \ref{RDF}, there exists a global smooth solution $g(t)$ to the Ricci-DeTurck flow for $g_0$ satisfying \eqref{gt}\eqref{dgt}.
Therefore, Definition \ref{scalar} can be used to define the scalar curvature lower bound at any $x\in M\backslash K$ and Definition \ref{cpt} can be used to define the scalar curvature lower bound on compact set $K$.
Therefore, the scalar curvature lower bound in the $\beta$-weak sense can be defined on an asymptotically flat manifold $M$ with $C^0$ metric $g$.

\section{Scalar curvature lower bound under the Ricci-DeTurck flow}
From Definition \ref{scalar}, the scalar curvature lower bound in the $\beta-$weak sense of a $C^0$ metric on an asymptotically flat manifold depends on the scalar curvature lower bound of the Ricci-DeTurck flow $g(t)$ for $g_0$ on $\mathbb{R}^n$.
Conversely, we would like to investigate the scalar curvature lower bound of the Ricci-DeTurck flow $g(t)_{t>0}$ at positive time from the scalar curvature lower bound of $g_0$ in the $\beta-$weak sense.

\begin{thm}\label{Rt0}
  Let $M^n$ be a smooth manifold with a $C^0-$asymptotically flat metric $g$ satisfying \eqref{AF}.
 For some $\beta\in(0, \frac{1}{2})$, suppose $g$ has scalar curvature bounded below by $\kappa\in\mathbb{R}$ in the sense of Ricci flow on $M\backslash K$, where $K$ is a compact set.
  Suppose $g_0$ is a $C^0$ metric on $\mathbb{R}^n$ constructed in Section 2 that agrees with $\Phi_*g$ on $B_{\delta}(x, \frac{1}{2^{\beta}-1}t^{\beta})$, for $x\in\mathbb{R}^n\backslash\overline{B(0, 1)}$, $t>0$, where $\Phi$ is a coordinate chart defined as in Definition \ref{scalar}.
  Let $(g_t)_{t>0}$ be the Ricci-DeTurck flow for $g_0$ on $\mathbb{R}^n$ with \eqref{gt}\eqref{dgt}.
  Then we have
  \begin{equation}\label{lb}
    R(x, t)\geq\kappa-Ct^{\lambda},
  \end{equation}
  where $C=C(n)$ is a constant and $\lambda>0$.
\end{thm}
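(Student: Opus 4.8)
The plan is to exploit that, under the Ricci--DeTurck flow, the scalar curvature is a supersolution of a linear drift--heat equation, so that $R(x,t)$ is controlled from below by a spatial average of $R(g_s)$ at an earlier time $s$, and then to let $s\to 0$ and feed in the $\beta$-weak lower bound. Differentiating the scalar curvature along \eqref{RD} and using that the flow is the pull-back of a Ricci flow by the DeTurck diffeomorphisms, one gets
\[ \partial_s R(g_s)=\Delta_{g_s}R(g_s)+\langle X_{\delta}(g_s),\nabla R(g_s)\rangle+2|Ric(g_s)|^2 . \]
Since the reaction term $2|Ric(g_s)|^2\ge 0$, the function $R(g_s)$ is a supersolution of $\partial_s u=L_s u$ with $L_s=\Delta_{g_s}+\langle X_{\delta}(g_s),\nabla\,\cdot\,\rangle$. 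Denoting by $K(x,y;t,s)$ the fundamental solution of $\partial_s-L_s$, the parabolic comparison principle on $\mathbb{R}^n$ then yields, for $0<s<t$,
\[ R(x,t)\ \ge\ \int_{\mathbb{R}^n}K(x,y;t,s)\,R(g_s)(y)\,dV_{g_s}(y). \]
The smoothing estimates \eqref{gt} and \eqref{dgt} guarantee that $R(g_s)$ is bounded below (by a multiple of $-\epsilon/s$) and that $X_{\delta}(g_s)=O(\epsilon s^{-1/2})$, which both legitimizes the comparison on the noncompact $\mathbb{R}^n$ and feeds the kernel bounds below.

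Next I would transfer the hypothesis to this single flow. The assumption is that $R_{C^0_{\beta}}(g)(y)\ge\kappa$ for every $y\in M\backslash K$; since $g_0$ agrees with $\Phi_*g$ on $B_{\delta}(x,\tfrac{1}{2^{\beta}-1}t^{\beta})$ and the Koch--Lamm flow of Theorem \ref{RDF} is unique and local, the value of the $\liminf$ in \eqref{C0scalar1} computed with $g_s$ coincides with the one computed in the chart around $y$. Because $\inf_{B_{\delta}(y,Cs^{\beta})}R(g_s)\le R(g_s)(y)$, this gives $\liminf_{s\to0}R(g_s)(y)\ge\kappa$ for every $y$ in that ball; outside the gluing region $g_s$ is asymptotically Euclidean, so $\liminf_{s\to0}R(g_s)(y)\ge-\Lambda$ holds on all of $\mathbb{R}^n$ for some finite $\Lambda$.

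The core estimate is then a tail computation. Writing $\rho=\tfrac{1}{2^{\beta}-1}t^{\beta}=\sum_{k\ge1}(2^{-k}t)^{\beta}$, I split the integral over $B_{\delta}(x,\rho)$ and its complement and let $s\to0$, using Fatou's lemma together with the convergence $K(\cdot,\cdot;t,s)\to K(\cdot,\cdot;t,0)$ and Gaussian off-diagonal bounds of the form $\int_{B_{\delta}(x,\rho)^c}K(x,y;t,s)\,dV_{g_s}\le C\exp(-c\rho^2/(t-s))$. On $B_{\delta}(x,\rho)$ the integrand is $\ge\kappa$ in the limit and on the complement $\ge-\Lambda$, so
\[ R(x,t)\ \ge\ \kappa-(\kappa+\Lambda)\,\eta,\qquad \eta:=C\exp\!\big(-c\,\rho^2/t\big). \]
Here $\rho^2/t=(2^{\beta}-1)^{-2}t^{2\beta-1}$, and this is exactly the point where $\beta<\tfrac12$ enters: since $2\beta-1<0$ we have $t^{2\beta-1}\to\infty$ as $t\to0$, so $\eta=C\exp(-c\,t^{2\beta-1})$ decays faster than any power and in particular $\eta\le C(n)t^{\lambda}$ for every $\lambda>0$ on a bounded time interval. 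This yields \eqref{lb}. The dyadic decomposition $\rho=\sum_{k\ge1}(2^{-k}t)^{\beta}$ reflects Bamler's block-by-block organization of this estimate, in which each parabolic annulus $[2^{-k-1}t,2^{-k}t]$ contributes a spatial budget $(2^{-k}t)^{\beta}$; it is also the reason the flow is required to match $\Phi_*g$ precisely on $B_{\delta}(x,\rho)$.

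The main obstacle is the last ingredient: Gaussian heat-kernel bounds for $L_s$ down to $s=0$, where the coefficients degenerate ($|Ric|\sim\epsilon/s$, $|X_{\delta}|\sim\epsilon/\sqrt s$). These cannot be quoted off the shelf; the plan is to obtain them, following Bamler \cite{MR3512888}, by parabolically rescaling each dyadic block $[2^{-k-1}t,2^{-k}t]$ by $2^{-k/2}t^{1/2}$, so that \eqref{dgt} becomes scale-invariant and the rescaled metrics are uniformly close to $\delta$, deriving a uniform off-diagonal estimate on each block, and then composing the kernels across blocks by a Chapman--Kolmogorov/telescoping argument. Controlling the interchange of $\liminf_{s\to0}$ with the spatial integral, and verifying that the final constant can be kept dependent on $n$ only, are the remaining technical points.
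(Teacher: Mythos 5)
Your framework---representing $R(x,t)$ via the heat kernel of $\partial_t-\Delta^{g(t)}+\nabla^{g(t)}_X$, discarding the nonnegative $2|Ric|^2$ term, and splitting into a near and a far region---is the same as the paper's, but your core estimate performs the split once over the \emph{fixed} ball $B_{\delta}(x,\rho)$ and then sends $s\to0$, and that single-limit step fails. Two things go wrong. First, the claim that $\liminf_{s\to0}R(g_s)\ge-\Lambda$ holds on all of $\mathbb{R}^n$ is unjustified: outside $B_{\delta}(x,\rho)$, and in particular in the gluing annulus, $g_0$ is merely continuous, and the only available bound is $|R(g_s)|\le C_1(n)/s$ coming from \eqref{dgt}; a $C^0$ metric can have scalar curvature unbounded below there in every weak sense, so no finite $\Lambda$ exists. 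Second, even with the correct bound, your tail term becomes $\frac{C_1}{s}\exp\left(-\frac{c\rho^2}{t-s}\right)$, and since the exponential tends to the fixed positive constant $\exp(-c\rho^2/t)$ while $C_1/s\to\infty$, this error diverges as $s\to0$. For the same reason Fatou's lemma is not applicable: there is no $s$-independent integrable lower bound for the integrand, and the asserted convergence of the kernels to a limit $K(\cdot,\cdot;t,0)$ is neither established nor needed in the paper.

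This degeneration is precisely what forces the dyadic \emph{iteration} in Bamler's argument and in the paper's proof: one only passes from time $t/2^i$ to $t/2^{i+1}$, sets $a_{i+1}=\inf_{B(x_i,r_{i+1})}R(\cdot,t/2^{i+1})$ over a ball of radius $r_{i+1}=(t/2^{i+1})^{\beta}$ adapted to the \emph{current} time scale and centered at the previous minimizer $x_i$, and the per-step error $\frac{2^{i+1}}{t}\exp\bigl(-(t/2^{i+1})^{2\beta-1}/D\bigr)$ is summable exactly because $2\beta-1<0$; the radii sum to $\frac{1}{2^{\beta}-1}t^{\beta}$, which is where the hypothesis on the agreement region comes from. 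You do invoke the block decomposition, but only to manufacture the Gaussian kernel bounds, which are in fact available off the shelf (Lemma 3 of \cite{MR3512888}, quoted as \eqref{kernel} in the paper and valid for all $0<s<t$); the place the iteration is indispensable is the propagation of the scalar curvature lower bound itself. A symptom of the gap is that your conclusion $\eta\le C\exp(-c\,t^{2\beta-1})$ would give decay faster than any power of $t$, strictly stronger than the $t^{\lambda}$ with $\lambda=(1-2\beta)\gamma-1$ that the paper's summation actually yields.
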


\begin{proof}
By the assumptions, $g_0$ is a continuous metric on $\mathbb{R}^n$ satisfying $||g_0-\delta||_{C^0(\mathbb{R}^n)}<\epsilon$ and $(g_0-\delta)$ is compactly supported.
From Theorem \ref{RDF}, there exists a global smooth solution $g(t)$ to the Ricci-DeTurck flow for $g_0$ satisfying \eqref{gt}\eqref{dgt}.
By \eqref{dgt}, we know that, for all $(x,t)\in\mathbb{R}^n\times\mathbb{R}^+$,
\begin{equation}\label{lower}
  |R(g(t))|\leq\frac{C_1(n)}{t}.
\end{equation}
The Ricci-DeTurck flow differs from the Ricci flow by a family of diffeomorphisms.
Then the scalar curvature $R$ of $g(t)$ along the Ricci-DeTurck flow \eqref{RD} with respect to the Euclidean background metric evolves by
\begin{equation}\label{sc}
  \partial_t R=\Delta R-\partial_{X_{\delta}(g)}R+2|Ric|^2.
\end{equation}
Denote $X_{\delta}(g)$ by $X$ and $g(t)$ by $g$.
We follow the approach of Lemma 4 in \cite{MR3512888} on the Ricci-DeTurck flow with continuous initial metric.

Consider the scalar heat kernel $\Phi^{RD}(x,t;y,s)$ for the operator $\partial_t-\Delta^{g(t)}+\nabla^{g(t)}_X$.
That is, for fixed $y\in\mathbb{R}^n$ and $0<s<t$,
\[ \partial_t\Phi^{RD}(x,t;y,s)=\Delta^{g(t)}_x\Phi^{RD}(x,t;y,s)-\nabla^{g(t)}_{X,x}\Phi^{RD}(x,t;y,s), \]
and
  \begin{equation}\label{int}
    \int_{\mathbb{R}^n}\Phi^{RD}(x,t;y,s)dg_s(y)=1.
  \end{equation}
From Lemma 3 of \cite{MR3512888}, for any $r>0$, $0<s<t$, we have
  \begin{equation}\label{kernel}
    \int_{\mathbb{R}^n\backslash B(x,r)}\Phi^{RD}(x,t;y,s)dg_s(y)\leq C_2(n)\exp\left(-\frac{r^2}{D(t-s)}\right),
  \end{equation}
  where $B(x,r)$ is the ball in $\mathbb{R}^n$ with respect to the Euclidean metric $\delta$ and $D>4$ is a constant.

 By \eqref{sc}, for any $x\in\mathbb{R}^n$, $0<s<t$, we have
 \begin{equation}\label{scalar1}
     R(x, t)\geq\int_{\mathbb{R}^n}\Phi^{RD}(x,t;y,s)R(y,s)dg_s(y)).
   \end{equation}
We now estimate the lower bound of $R(x,t)$ for $x\in\mathbb{R}^n$ and $t>0$ by the scalar curvature lower bound in the $\beta$-weak sense of $g_0$.
From \eqref{scalar1}, \eqref{kernel}, \eqref{int}, \eqref{lower}, we have
\begin{equation}\label{est}
  \begin{split}
     R(x,t) & \geq \int_{\mathbb{R}^n}\Phi^{RD}(x,t;y,\frac{t}{2})R(y,\frac{t}{2})dg_{\frac{t}{2}}(y) \\
       & = \int_{B(x,r_1)}\Phi^{RD}(x,t;y,\frac{t}{2})R(y,\frac{t}{2})dg_{\frac{t}{2}}(y)
       +\int_{\mathbb{R}^n\backslash B(x,r_1)}\Phi^{RD}(x,t;y,\frac{t}{2})R(y,\frac{t}{2})dg_{\frac{t}{2}}(y)\\
       & \geq a_1\left(1-\int_{\mathbb{R}^n\backslash B(x,r_1)}\Phi^{RD}(x,t;y,\frac{t}{2})dg_{\frac{t}{2}}(y)\right)
       -\frac{C_1}{t/2}\int_{\mathbb{R}^n\backslash B(x,r_1)}\Phi^{RD}(x,t;y,\frac{t}{2})dg_{\frac{t}{2}}(y)\\
       & = a_1-\left(a_1+\frac{C_1}{t/2}\right)\int_{\mathbb{R}^n\backslash B(x,r_1)}\Phi^{RD}(x,t;y,\frac{t}{2})dg_{\frac{t}{2}}(y)\\
       & \geq a_1-\frac{2C_1C_2}{t/2}\exp\left(-\frac{r_1^2}{D(t/2)}\right),
  \end{split}
\end{equation}
where $r_1>0$ is to be chosen later and
\[ a_1=\inf_{B(x, r_1)}R(\cdot, \frac{t}{2})\leq \frac{C_1}{t/2}.\]
Then we can estimate $a_1$ in the same way to get that there is a point $x_1\in B(x, r_1)$, such that
\begin{equation*}
  \begin{split}
    a_1 & = \inf_{B(x, r_1)}R(\cdot, \frac{t}{2})=R(x_1, \frac{t}{2})\\
        & \geq \int_{\mathbb{R}^n}\Phi^{RD}(x_1, \frac{t}{2}; y, \frac{t}{2^2})R(y, \frac{t}{2^2})dg_{\frac{t}{2^2}}(y)\\
        & = \int_{B(x_1, r_2)}\Phi^{RD}(x_1, \frac{t}{2}; y, \frac{t}{2^2})R(y, \frac{t}{2^2})dg_{\frac{t}{2^2}}(y)
        +\int_{\mathbb{R}^n\backslash B(x_1, r_2)}\Phi^{RD}(x_1, \frac{t}{2}; y, \frac{t}{2^2})R(y, \frac{t}{2^2})dg_{\frac{t}{2^2}}(y)\\
        & \geq a_2\left(1-\int_{\mathbb{R}^n\backslash B(x_1, r_2)}\Phi^{RD}(x_1, \frac{t}{2}; y, \frac{t}{2^2})dg_{\frac{t}{2^2}}(y)\right)
        -\frac{C_1}{t/2^2}\int_{\mathbb{R}^n\backslash B(x_1, r_2)}\Phi^{RD}(x_1, \frac{t}{2}; y, \frac{t}{2^2})dg_{\frac{t}{2^2}}(y)\\
        & \geq a_2-\frac{2C_1C_2}{t/2^2}\exp\left(-\frac{r_2^2}{D(t/2^2)}\right),
  \end{split}
\end{equation*}
where $0<r_2<r_1$ is to be chosen later and
\[ a_2=\inf_{B(x_1, r_2)}R(\cdot, \frac{t}{2^2})\leq \frac{C_1}{t/2^2}. \]
Similarly, we can do the steps iteratively.
Then, we obtain a series of radius $r_k$ with $0<r_{k+1}<r_k$, points $x_k\in B(x_{k-1},r_k)$ and numbers
\[ a_k=\inf_{B(x_{k-1}, r_k)}R(\cdot, \frac{t}{2^k})= R(x_k, \frac{t}{2^k}), \]
for $k\in \mathbb{N}^+$ with $x_0=x$, satisfying
\begin{equation}\label{ak}
  a_k\geq a_{k+1}-\frac{2C_1C_2}{t/2^{k+1}}\exp\left(-\frac{r_{k+1}^2}{D(t/2^{k+1})}\right).
\end{equation}
Then \eqref{est} and \eqref{ak} imply that
\begin{equation}\label{final}
  R(x,t)\geq a_k-2C_1C_2\sum_{i=1}^{k}(\frac{2^i}{t})\exp\left(-\frac{r_i^2}{D(t/2^i)}\right).
\end{equation}
By construction, we know $x_k\in B(x, \sum_{i=1}^{k}r_i)$.
Letting $k\rightarrow\infty$, we hope both $\sum_{i=1}^{+\infty}r_i$ and $\sum_{i=1}^{+\infty}(\frac{2^i}{t})\exp\left(-\frac{r_i^2}{D(t/2^i)}\right)$ converge.
By choosing $r_i=(\frac{t}{2^i})^{\beta}$, then $\sum_{i=1}^{+\infty}r_i$ converges provided that $\beta>0$.
And for any $\gamma>1$, we have that
\begin{equation*}
  \begin{split}
     \sum_{i=1}^{+\infty}(\frac{2^i}{t})\exp\left(-\frac{r_i^2}{D(t/2^i)}\right) & =  \sum_{i=1}^{+\infty}(\frac{2^i}{t})\exp\left(-\frac{(t/2^i)^{2\beta-1}}{D}\right)\\
     & \leq\sum_{i=1}^{+\infty}(\frac{2^i}{t})D^{\gamma}(\frac{t}{2^i})^{-(2\beta-1)\gamma}\\
     & =D^{\gamma}t^{-1-(2\beta-1)\gamma}\sum_{i=1}^{\infty}(2^{1+(2\beta-1)\gamma})^i,
  \end{split}
\end{equation*}
which converges when $1+(2\beta-1)\gamma<0$. This is satisfied if $\beta\in(0, \frac{1}{2})$, for any $\gamma>\frac{1}{1-2\beta}>1$.
Consequently, for some $\beta\in(0, \frac{1}{2})$, $r_i=(\frac{t}{2^i})^{\beta}$, there holds
\[ \sum_{i=1}^{+\infty}(\frac{2^i}{t})\exp\left(-\frac{r_i^2}{D(t/2^i}\right)\leq C_3(n, \beta)t^{\lambda}, \]
where $\lambda =-1-(2\beta-1)\gamma>0$. And
\begin{equation*}
  \begin{split}
     \lim_{k\rightarrow\infty}a_k &= \lim_{k\rightarrow\infty}\inf_{B(x_{k-1},(\frac{t}{2^k})^{\beta})}R(\cdot, \frac{t}{2^k})  \\
       & \geq \lim_{k\rightarrow\infty}\inf_{B(x, \frac{1}{2^{\beta}-1}t^{\beta})}R(\cdot, \frac{t}{2^k})\\
       & \geq \liminf_{t\rightarrow0}\inf_{B(x, \frac{1}{2^{\beta}-1}t^{\beta})}R(\cdot, t)\\
       & \geq \inf_{C>0}\liminf_{t\rightarrow0}\inf_{B(x, Ct^{\beta})}R(\cdot, t)\\
       & = R_{C^0_{\beta}}(g_0)(x)
  \end{split}
\end{equation*}
Therefore, by Definition \ref{scalar} and \eqref{final}, for some $\beta\in (0, \frac{1}{2})$, $\lambda>0$, we arrive at,
\[ R(x,t)\geq R_{C^0_{\beta}}(g_0)(x)-2C_1C_2C_3t^{\lambda}\geq\kappa-Ct^{\lambda}. \]

\end{proof}

This concludes Theorem 1.5 of \cite{MR4034918} in the asymptotically flat case which says the preservation of the scalar curvature lower bound in the $\beta$-weak sense of a $C^0$ metric under the Ricci-DeTurck flow.

\section{Scalar curvatures of $W^{1,p}$ metrics in distributional sense}
In this section, we consider an asymptotically flat manifold $M$ with a $W^{1,p}$ metric $g$.
Lee-LeFloch defined the scalar curvature for $W^{1,p}$ metrics in the distributional sense and proved the positive mass theorem for spin manifolds \cite{MR3366052}.
\begin{definition}(Definition 2.1 in \cite{MR3366052})\label{scalar2}
Let $M$ be a smooth Riemannian manifold endowed with a smooth background metric $h$.
Given any Riemannian metric $g\in L^{\infty}_{loc}\bigcap W^{1,2}_{loc}$ on $M$ with
$g^{-1}\in L^{\infty}_{loc}$, for any compactly supported smooth function $u:M\rightarrow \mathbb{R} $, the scalar curvature distribution $R_g$ is defined by
\[ \ll R_g,u \gg :=\int_{M}\left(-V\cdot \bar{\nabla}\left(u\frac{\text{d}\mu_g}{\text{d}\mu_h}\right)+Fu\frac{\text{d}\mu_g}{\text{d}\mu_h}\right)\text{d}\mu_h, \]
where
\[ \Gamma_{ij}^k=\frac12g^{kl}(\bar{\nabla}_ig_{jl}+\bar{\nabla}_jg_{il}-\bar{\nabla}_lg_{ij})=\Gamma_{ij}^k(g)-\bar{\Gamma}_{ij}^k(h), \]
\[ V^k=g^{ij}\Gamma_{ij}^k-g^{ik}\Gamma_{ji}^j=g^{ij}g^{kl}(\bar{\nabla}_j g_{il}-\bar{\nabla}_l g_{ij}), \]
\[ F=g^{ij}\bar{Ric_{ij}}-\bar{\nabla}_k g^{ij}\Gamma_{ij}^k+\bar{\nabla}_kg^{ik}\Gamma_{ji}^j+g^{ij}(\Gamma_{kl}^k\Gamma_{ij}^l-\Gamma_{jl}^k\Gamma_{ik}^l), \]
and $\bar{\nabla}$,$\bar{Ric}$ are the Levi-Civita connection and the Ricci curvature with respect to $h$, $\Gamma_{ij}^k(g)$ and $\bar{\Gamma}_{ij}^k(h)$ denote the Christoffel symbols of $g$ and $h$ respectively, the dot product is taken using the metric $h$, and $\text{d}\mu_h$ and $\text{d}\mu_g$ denote the volume measures of $h$ and $g$, respectively.
\end{definition}

We say the scalar curvature distribution of $g$ satisfies $\ll R_g, u\gg\geq\kappa$ if $\ll R_g, u\gg-\kappa\int_M ud\mu_g\geq0$ for any compactly supported nonnegative function $u$.
Jiang-Sheng-Zhang \cite{MR4480213} generalized the positive mass theorem of Lee-LeFloch from spin manifolds to non-spin manifolds with the metric smooth away from some closed subset.
The author also gave the positive mass theorem of non-spin manifolds with $C^0\bigcap W^{1,p}$ metrics \cite{MR4100922}.
In \cite{MR4480213}, they show the distributional scalar curvature $\ll R_g, u\gg$ is nonnegative assuming the metric is smooth and has nonnegative scalar curvature away from a bounded subset whose measure is limited.
They then demonstrate that the scalar curvature lower bound in the distributional sense is preserved along the Ricci flow on a compact manifold with a $W^{1, p}(p>n)$ metric in \cite{MR4596043}.
As a corollary, the distributional scalar curvature also has lower bound in the $\beta$-weak sense of Definition \ref{cpt}.

Now we use Definition \ref{scalar} to consider the scalar curvature lower bound of an asymptotically flat manifold $M$ with a $W^{1, p}(p>n)$ metric.
Let $M^n$ be an asymptotically flat manifold with a metric $g\in W^{1,p}_{-\tau}, p>n, \tau>\frac{n-2}{2}$, that is,
by Definition 2.1 of Bartnik in \cite{MR849427}, there exists a compact subset $K\subset M$ and a diffeomorphism $\Phi: M\backslash K\rightarrow\mathbb{R}^n\backslash\overline{B(0,1)}$,
such that $\Phi_*g\in W^{1,p}_{loc}(\mathbb{R}^n)$ and
\[ ||\Phi_*g(x)-\delta(x)||_{W^{1,p}_{-\tau}}<+\infty, \]
where the weighted Sobolev norm $W^{1,p}_{-\tau}$ is defined in Definition 1.1 of \cite{MR849427}.
By the Sobolev embedding of the weighted Sobolev space ((1.11) of \cite{MR849427}), we have,
for some $\alpha\in(0, 1-\frac{n}{p}]$,
$\Phi_*g\in W^{1,p}_{-\tau}\subset C^{0,\alpha}_{-\tau}$,
where the weighted Holder space is also defined in (1.12) of \cite{MR849427}, and
for any $x\in\mathbb{R}^n\backslash\overline{B(0,1)}$,
\[ |\Phi_*g-\delta|(x)=C|x|_{\delta}^{-\tau}. \]

If the scalar curvature distribution $\ll R_g, u \gg\geq\kappa$, for any compactly supported nonnegative smooth function $u$, where $\kappa\geq 0$ is a constant.
By the asymptotically flatness, $(M\backslash K, g)$ is isometric to $(\mathbb{R}^n\backslash\overline{B(0,1)}, \Phi_*g)$.
Then, for any nonnegative $\phi\in C^{\infty}_0(\mathbb{R}^n\backslash\overline{B(0,1)})$ and $u=\phi(\Phi)$, we have
\begin{equation*}
  \ll R_{\Phi_*g}, \phi \gg=\ll R_g, u \gg\geq\kappa.
\end{equation*}

For any $x\in M\backslash K$, denote
\[ |\Phi(x)|_{\delta}=|x_0|_{\delta}=r>>1. \]
Choose a neighborhood $\mathcal{U}_x$ of $x$, such that $\Phi(\mathcal{U}_x)=B_{\delta}(x_0, r_1)$, for some $0<r_1<<1$.
Then we take a cut-off function $\chi\in C^{\infty}_0(\mathbb{R}^n)$ satisfying $0\leq\chi\leq1$, $|\nabla\chi|\leq c_2$ and
\begin{equation*}
  \chi=\begin{cases}
         1, & \ \   \mbox{on \ \ } B_{\delta}(x_0, r_1), \\
         0, &  \ \   \mbox{on \ \ } \mathbb{R}^n\backslash B_{\delta}(x_0, r_1+1).
       \end{cases}
\end{equation*}

Let $g_0=\chi\Phi_*g+(1-\chi)\delta$, then $g_0\in W^{1, p}(\mathbb{R}^n)$ and
$||g_0-\delta||_{C^0(\mathbb{R}^n)}\leq ||\Phi_*g-\delta||_{C^0(\mathbb{R}^n)}=c_1r^{-\tau}=\epsilon$, for some $\epsilon<1$.
The scalar curvature distribution $\ll R_{g_0}, \phi \gg$ of $g_0$ can be defined by Definition \ref{scalar2}.
We will always choose the background metric $h=\delta$ in Definition \ref{scalar2}.
We have the following lemma about the scalar curvature distribution.

\begin{lemma}\label{g00}
For any smooth nonnegative function $\phi\in C^{\infty}_0(B_{\delta}(x_0, r_1+1))$,
the scalar curvature distribution of $g_0$ is
  \begin{equation}\label{g0}
    \ll R_{g_0}, \phi \gg=\ll R_{\Phi_*g}, \phi \gg +c_3\epsilon,
  \end{equation}
\end{lemma}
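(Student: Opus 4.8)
The plan is to localize the difference $\ll R_{g_0},\phi\gg-\ll R_{\Phi_*g},\phi\gg$ to the annulus where the cutoff $\chi$ is active, and then to bound it term by term using the $C^0$-smallness of $\Phi_*g-\delta$ together with the weighted $L^p$ decay of its gradient. Writing $h:=\Phi_*g-\delta$, so that $g_0=\delta+\chi h$ and $\Phi_*g=\delta+h$, I denote by
\[
\mathcal I_g:=-V(g)\cdot\partial\big(\phi\sqrt{\det g}\big)+F(g)\,\phi\sqrt{\det g}
\]
the integrand of $\ll R_g,\phi\gg$ with background metric $\delta$ (so that $\bar\nabla=\partial$ and $\bar{Ric}=0$). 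Since $\chi\equiv1$ on $B_\delta(x_0,r_1)$, the metrics $g_0$ and $\Phi_*g$ together with all their first derivatives coincide there, hence $\mathcal I_{g_0}=\mathcal I_{\Phi_*g}$ on $B_\delta(x_0,r_1)$. Because $\mathrm{supp}\,\phi\subset B_\delta(x_0,r_1+1)$, the two distributions differ only through the annulus $A:=B_\delta(x_0,r_1+1)\setminus B_\delta(x_0,r_1)$, and the claim reduces to estimating $\int_A(\mathcal I_{g_0}-\mathcal I_{\Phi_*g})\,d\mu_\delta$.

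The next step is to expand this integrand. Since $V$ is linear and $F$ quadratic in $\partial g$, with coefficients depending smoothly on $g$ through $g^{-1}$ and $\sqrt{\det g}$, the quantity $\mathcal I_g$ is a polynomial of degree at most two in $\partial g$. On $A$ one has $\partial g_0-\partial(\Phi_*g)=(\chi-1)\partial h+(\partial\chi)h$, $g_0-\Phi_*g=(\chi-1)h$, and $g_0^{-1}-(\Phi_*g)^{-1}=O(\epsilon)$, so every term of $\mathcal I_{g_0}-\mathcal I_{\Phi_*g}$ carries at least one of the factors $(\chi-1)$, $\partial\chi$, or a coefficient difference of size $\epsilon$. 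I would group the resulting terms into the pointwise-small ones, carrying a factor $(\partial\chi)h$ or a coefficient difference and controlled by $|h|\le\epsilon$, $|\partial\chi|\le c_2$ and $\|\phi\|_{C^1}$, and the genuinely quadratic ones of the form $(\chi^2-1)(\partial h)^2$ arising from $F$ and from the term $V\cdot\phi\,\partial\sqrt{\det g}$.

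The main obstacle is this last group, because $\partial h$ lies only in $L^p$ and is not pointwise small. Here I would combine H\"older's inequality with the weighted bound: since $|x|\approx r$ on $A$ and $\|h\|_{W^{1,p}_{-\tau}}<\infty$, one has $\int_A|\partial h|^p\lesssim r^{-((\tau+1)p-n)}\|h\|^p_{W^{1,p}_{-\tau}}$, and therefore
\[
\int_A|\partial h|^2\le |A|^{1-2/p}\Big(\int_A|\partial h|^p\Big)^{2/p}\lesssim r^{-2(\tau+1)+2n/p}.
\]
Comparing the exponent with $\epsilon=c_1r^{-\tau}$, the inequality $2(\tau+1)-2n/p\ge\tau$ holds precisely because $p>n$ forces $2n/p<2\le\tau+2$; hence $\int_A|\partial h|^2=O(\epsilon)$. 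The cross terms $|\partial\chi|\,|h|\,|\partial h|$ and the linear terms are handled identically through $\int_A|\partial h|\lesssim r^{-(\tau+1)+n/p}=o(\epsilon)$, while $\int_A|\partial\chi|^2|h|^2=O(\epsilon^2)$. Summing, the annulus integral is bounded in absolute value by $C\epsilon$ with $C=C(n,p,\tau,\|h\|_{W^{1,p}_{-\tau}},c_2,r_1,\|\phi\|_{C^1})$, and recording this constant as $c_3$ yields \eqref{g0}.
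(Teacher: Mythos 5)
Your proof is correct and follows the same overall strategy as the paper's: both localize the difference to the region where the cutoff $\chi$ is active and expand $V$, $F$ and the volume factor of $g_0=\delta+\chi h$ against those of $\Phi_*g=\delta+h$. The one place where you genuinely diverge is in how the first-derivative terms are controlled. The paper's proof absorbs all contributions supported on $\mathrm{supp}\{\nabla\chi\}$ --- including terms like $\chi(1-\chi)\,\partial\Phi_*g$ and the quadratic pieces of $F$ --- into a single quantity $I$ that it asserts is $O(r^{-\tau})$ pointwise; this is not literally justified, since for a $W^{1,p}_{-\tau}$ metric $\partial h$ is only controlled in weighted $L^p$, not in $L^\infty$. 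You instead keep these terms explicit and estimate $\int_A|\partial h|$ and $\int_A|\partial h|^2$ by H\"older's inequality against the weighted norm, checking that the resulting exponents $r^{-(\tau+1)+n/p}$ and $r^{-2(\tau+1)+2n/p}$ are dominated by $\epsilon=c_1r^{-\tau}$ precisely because $p>n$. This buys a rigorous justification of the error bound that the paper only sketches, at the cost of a slightly longer bookkeeping of which terms carry a factor $(\chi-1)$, $\partial\chi$, or a coefficient difference. One cosmetic remark: like the paper, you should read the conclusion \eqref{g0} as the two-sided estimate $\left|\ll R_{g_0},\phi\gg-\ll R_{\Phi_*g},\phi\gg\right|\le c_3\epsilon$ rather than as a literal identity, which is indeed how you state your final bound.
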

where $c_3$ depends on $\chi, \phi$.
\begin{proof}
  Since $g_0=\delta+\chi(\Phi_*g-\delta)$ and $\chi\leq1$, $|\Phi_*g-\delta|(x)=C_1r^{-\tau}<1$, we have
  \[ g_0^{-1}=[\delta+\chi(\Phi_*g-\delta)]^{-1}=\delta-\chi(\Phi_*g-\delta)+O(r^{-2\tau}), \]
  \[ (\Phi_*g)^{-1}=(\delta+\Phi_*g-\delta)^{-1}=\delta-(\Phi_*g-\delta)+O(r^{-2\tau}). \]
  So
  \[ g_0^{-1}=\delta-\chi(\delta-(\Phi_*g)^{-1})+O(r^{-2\tau})=\delta+\chi((\Phi_*g)^{-1}-\delta)+O(r^{-2\tau}). \]
  Then by definition, we can compute
  \begin{equation*}
    \begin{split}
       \Gamma^k_{ij}(g_0)= & \frac{1}{2}g_0^{kl}(\partial_ig_{0jl}+\partial_jg_{0il}-\partial_lg_{0ij}) \\
        = & \chi^2\Gamma^k_{ij}(\Phi_*g)+\chi(1-\chi)|\partial\Phi_*g|+\chi|\nabla\chi||\Phi_*g-\delta|
        +(1-\chi)|\nabla\chi||\Phi_*g-\delta|+O(r^{-3\tau})\\
        = & \chi^2\Gamma^k_{ij}(\Phi_*g)+I+O(r^{-3\tau}),
    \end{split}
  \end{equation*}
  where $supp I= supp\{\nabla\chi\}$ and $I=O(r^{-\tau})$.
  And
  \begin{equation*}
    V^k(g_0)=g_0^{ij}g_0^{kl}(\partial_jg_{0il}-\partial_lg_{0ij})=\chi^3V^k(\Phi_*g)+I+O(r^{-3\tau}),
  \end{equation*}
  \begin{equation*}
    \begin{split}
       F(g_0)= & -\partial_kg_0^{ij}\Gamma_{ij}^k(g_0)+\partial_kg_0^{ik}\Gamma_{ji}^j(g_0)+
       g_0^{ij}(\Gamma^k_{kl}(g_0)\Gamma^l_{ij}(g_0)-\Gamma^k_{jl}(g_0)\Gamma^l_{ik}(g_0)) \\
        = & -\chi^3\partial_k(\Phi_*g)^ij\Gamma_{ij}^k(\Phi_*g)+\chi^3\partial_k(\Phi_*g)^{ik}\Gamma^j_{ji}(\Phi_*g)\\
       & + \chi^5\Phi_*g^{ij}(\Gamma_{kl}^k(\Phi_*g)\Gamma_{ij}^l(\Phi_*g)-\Gamma^k_{jl}(\Phi_*g)\Gamma^l_{ik}(\Phi_*g))+
        I+O(r^{-2\tau}).
    \end{split}
  \end{equation*}
  Then we get
  \begin{equation*}
    \begin{split}
       \ll R_{g_0}, \phi \gg =& \int_{\mathbb{R}^n}\left( -V\cdot\partial\left(\phi\frac{d\mu_{g_0}}{d\mu_{\delta}}\right)+ F\phi\frac{d\mu_{g_0}}{d\mu_{\delta}} \right) d\mu_{\delta} \\
        = & \int_{B_{\delta}(x_0, r_1+1)}\left( -V\cdot\partial\left(\phi\frac{d\mu_{g_0}}{d\mu_{\delta}}\right)+ F\phi\frac{d\mu_{g_0}}{d\mu_{\delta}} \right) d\mu_{\delta} \\
        = & \ll R_{\Phi_*g}, \phi \gg +
         Cr^{-\tau}\int_{supp\{I\}}(|\partial\phi|+\phi) d\mu_{g_0}.
    \end{split}
  \end{equation*}
\end{proof}

By Theorem \ref{RDF}, there exists a smooth global solution $(g_t)_{t>0}$ of the Ricci-DeTurck flow with respect to the Euclidean background metric and $g(t)\rightarrow g_0$ in $C^0_{loc}$ as $t\rightarrow0$.
Actually, since $\Phi_*g\in W^{1,p}_{-\tau}$, we have better convergence.

\begin{lemma}\label{1n}
  For any $x\in\mathbb{R}^n$, there is an $\epsilon_1<1$ such that
  \[ \int_{B(x, 1)}|\nabla g_0|^nd\mu_{\delta}<\epsilon_1, \]
  where $\nabla=\partial$ is the connection of the Euclidean metric $\delta$.
\end{lemma}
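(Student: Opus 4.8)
The plan is to exploit that $g_0-\delta=\chi(\Phi_*g-\delta)$ is supported in the fixed region $B_\delta(x_0,r_1+1)$, which sits at Euclidean distance $r=|x_0|_\delta\gg1$ from the origin, so that Bartnik's weight $\sigma=(1+|\cdot|^2)^{1/2}$ is comparable to $r$ on the support. First I would differentiate $g_0=\delta+\chi(\Phi_*g-\delta)$ (recalling $\nabla\delta=0$) to get the pointwise bound
\[ |\nabla g_0|\leq |\nabla\chi|\,|\Phi_*g-\delta|+\chi\,|\nabla(\Phi_*g)|, \]
and then, using $(a+b)^n\leq 2^{n-1}(a^n+b^n)$, split the integral as
\[ \int_{B(x,1)}|\nabla g_0|^n\,d\mu_\delta\leq 2^{n-1}\int_{B(x,1)}|\nabla\chi|^n|\Phi_*g-\delta|^n\,d\mu_\delta+2^{n-1}\int_{B(x,1)}\chi^n|\nabla(\Phi_*g)|^n\,d\mu_\delta. \]
The whole estimate is uniform in $x$, since the integrand vanishes unless $B(x,1)$ meets the support, in which case every relevant point lies at distance $\geq r-2$ from the origin.

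For the first integral I would use $|\nabla\chi|\leq c_2$ together with the $C^{0,\alpha}_{-\tau}$ pointwise decay $|\Phi_*g-\delta|\leq c_1 r^{-\tau}=\epsilon$ valid on the support, and $\mu_\delta(B(x,1))=\omega_n$, giving a contribution bounded by $C r^{-n\tau}$. The second integral is where the weighted Sobolev regularity enters. Since $p>n$ I apply H\"older on $B(x,1)$ with exponents $p/n$ and its conjugate to pass from $L^n$ to $L^p$:
\[ \int_{B(x,1)}\chi^n|\nabla(\Phi_*g)|^n\,d\mu_\delta\leq\omega_n^{1-n/p}\left(\int_{B(x,1)}|\nabla(\Phi_*g)|^p\,d\mu_\delta\right)^{n/p}. \]
The integrand is supported in $S:=B(x,1)\cap B_\delta(x_0,r_1+1)$, on which $\sigma\geq r/2$ for $r$ large. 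Reading off the gradient part of Bartnik's norm, $\int|\nabla(\Phi_*g)|^p\sigma^{(\tau+1)p-n}\,d\mu_\delta\leq\|\Phi_*g-\delta\|_{W^{1,p}_{-\tau}}^p=:M^p$; since $\tau>0$ and $p>n$ force the weight exponent $(\tau+1)p-n>0$, I can factor out the weight on $S$ to obtain $\int_{B(x,1)}|\nabla(\Phi_*g)|^p\,d\mu_\delta\leq(r/2)^{-((\tau+1)p-n)}M^p$. Feeding this into H\"older produces a second contribution of the form $C r^{-\frac{n}{p}((\tau+1)p-n)}$.

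Combining the two pieces yields
\[ \int_{B(x,1)}|\nabla g_0|^n\,d\mu_\delta\leq C\left(r^{-n\tau}+r^{-\frac{n}{p}((\tau+1)p-n)}\right), \]
with both exponents strictly positive and the constant independent of $x$. Since $r=|x_0|_\delta\gg1$, the right-hand side is smaller than any prescribed $\epsilon_1<1$, which is the claim. The only genuine bookkeeping obstacle is keeping Bartnik's weight conventions straight, so that the gradient sits in $L^p_{-\tau-1}$ and the weight $\sigma^{(\tau+1)p-n}$ really grows at infinity; once the sign of $(\tau+1)p-n$ is confirmed from $\tau>\frac{n-2}{2}$ and $p>n$ (indeed $(\tau+1)p-n>p-n>0$), the remaining steps are routine.
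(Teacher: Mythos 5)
Your proof is correct and follows essentially the same route as the paper: the same pointwise decomposition of $\nabla g_0$, H\"older's inequality on $B(x,1)$ with exponents $p/n$ and $p/(p-n)$ to pass to the weighted $L^p_{-\tau-1}$ norm of $\nabla(\Phi_*g)$, and the same resulting decay rate $r^{-\frac{n}{p}((\tau+1)p-n)}$ (the paper's exponent $\eta(p-n)/p$ equals yours). The only cosmetic difference is that you factor the weight out of the $L^p$ integral after an unweighted H\"older step, while the paper inserts the weight directly into the H\"older splitting.
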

\begin{proof}
Since $\nabla g_0$ is supported in $B_{\delta}(x_0, r_1+1)$, we only consider $x\in B_{\delta}(x_0, r_1+1)$. By $g_0=\delta+\chi(\Phi_*g-\delta)$, we have
\[ |\nabla g_0|=|\chi\nabla\Phi_*g+\nabla\chi(\Phi_*g-\delta)|\leq c_2\epsilon+|\nabla\Phi_*g|. \]
  By the Holder inequality,
  \begin{equation*}
  \begin{split}
     \int_{B(x,1)}|\nabla \Phi_*g|^nd\mu_{\delta}\leq &
     \left(\int_{B(x,1)}|\nabla \Phi_*g|^pr^{(\tau+1)p-n}d\mu_{\delta}\right)^{\frac{n}{p}}
     \left(\int_{B(x,1)}r^{\eta}d\mu_{\delta}\right)^{\frac{p-n}{p}} \\
      \leq & C||\nabla \Phi_*g||^n_{L^{p}_{-\tau-1}}r^{\eta(p-n)/p},
  \end{split}
  \end{equation*}
  where $r=|x|_{\delta}$, $\eta=(n-(\tau+1)p)\cdot\frac{n}{p-n}<0$.
\end{proof}

By Lemma \ref{1n} and Theorem 1.1 of Chu-Lee in \cite{arxiv2}, the smooth solution $(g_t)_{t>0}$ of the Ricci-DeTurck flow satisfies $g(t)\rightarrow g_0$ in $W^{1,n}_{loc}$ as $t\rightarrow0$ and $g(t)\rightarrow g_0$ in $C^{\infty}_{loc}(\Omega)$ if $g_0\in C^{\infty}_{loc}(\Omega)$ as $t\rightarrow\infty$.

By the definition of the scalar curvature distribution, we know, for any compactly supported smooth function $\phi$,
\[ \ll R(g_T), \phi \gg=\int_{\mathbb{R}^n}R(g_T)\phi d\mu_{g_T}, \]
where $T>0$.
We now prove Theorem \ref{dfr}.

We will need an estimate of the length along the Ricci flow due to Perelman \cite{arxiv3}.
\begin{lemma}(Lemma 8.3(a) in \cite{arxiv3})\label{Pel}
  Suppose g(t) is a Ricci flow and $Ric(x, t_0)\leq(n-1)K$ when $d_{t_0}(x, x_0)<r_0$.
  Then the distance function $d(x, t)=d_{t}(x, x_0)$ satisfies at $t=t_0$ outside $B_{t_0}(x_0, r_0)$ the differential inequality
  \[ (\partial_t-\Delta)d\geq -(n-1)(\frac{2}{3}Kr_0+r_0^{-1}), \]
  in the barrier sense.
\end{lemma}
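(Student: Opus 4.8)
The plan is to establish the inequality in the barrier sense at each point $(x,t_0)$ with $x\notin B_{t_0}(x_0,r_0)$ by combining the first variation of length under Ricci flow, which controls $\partial_t d$, with the second variation (index form) of arc length, which controls $-\Delta d$; the two are tied together by a radial cutoff that confines the hypothesis $Ric(\cdot,t_0)\le(n-1)K$ to the ball $B_{t_0}(x_0,r_0)$, where it is available, and never invokes curvature outside it. Fix once and for all a unit-speed minimal geodesic $\gamma:[0,\ell]\to M$ for $g(t_0)$ from $x_0$ to $x$, where $\ell:=d_{t_0}(x_0,x)>r_0$, and note that $\gamma(s)\in B_{t_0}(x_0,r_0)$ for $0\le s<r_0$.

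First I would handle the time derivative. Because $\gamma$ is a fixed curve it remains a competitor for the distance at every time, so $d_t(x_0,x)\le L_t(\gamma)$ with equality at $t=t_0$; hence the smooth function $t\mapsto L_t(\gamma)$ is an upper support for $t\mapsto d_t(x_0,x)$ at $t_0$. Differentiating $L_t(\gamma)=\int_0^1\sqrt{g(t)(\gamma_u,\gamma_u)}\,du$ and substituting the Ricci flow equation $\partial_t g=-2Ric$ gives the first-variation identity
\[
  \frac{d}{dt}\Big|_{t_0}L_t(\gamma)\ =\ -\int_0^\ell Ric(\gamma',\gamma')\,ds,
\]
and, $L_t(\gamma)$ being an upper support touching at $t_0$, this is the value that $\partial_t d$ contributes to the lower bound for $(\partial_t-\Delta)d$ in the barrier sense.

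Next I would handle the spatial Laplacian through the second variation of arc length. Let $E_1,\dots,E_{n-1}$ be a parallel orthonormal frame along $\gamma$ orthogonal to $\gamma'$, and let $\psi:[0,\ell]\to[0,1]$ be the cutoff with $\psi(s)=s/r_0$ on $[0,r_0]$ and $\psi\equiv1$ on $[r_0,\ell]$. The fields $\psi E_i$ vanish at $x_0$ and equal $E_i$ at $x$, so the index form bounds the Hessian of $d_{t_0}(x_0,\cdot)$ at $x$ from above; summing over $i$ and tracing the resulting curvature term to $Ric(\gamma',\gamma')$ gives the Laplacian comparison
\[
  \Delta d\ \le\ \int_0^\ell\Big((n-1)|\psi'|^2-\psi^2\,Ric(\gamma',\gamma')\Big)\,ds
\]
in the barrier sense. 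Adding this to the time term, the curvature contributions combine into $-(1-\psi^2)Ric(\gamma',\gamma')$, whose support lies in $[0,r_0]$ because $\psi\equiv1$ on $[r_0,\ell]$; there $\gamma(s)\in B_{t_0}(x_0,r_0)$, so the hypothesis yields $(1-\psi^2)Ric(\gamma',\gamma')\le(1-\psi^2)(n-1)K$. Using $\int_0^{r_0}(1-s^2/r_0^2)\,ds=\tfrac{2}{3}r_0$ and $(n-1)\int_0^{r_0}r_0^{-2}\,ds=(n-1)r_0^{-1}$, this produces exactly
\[
  (\partial_t-\Delta)d\ \ge\ -(n-1)\Big(\tfrac{2}{3}Kr_0+r_0^{-1}\Big).
\]

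The genuine difficulty is regularity rather than computation, and I expect it to be the main obstacle: $d_{t_0}(x_0,\cdot)$ is not smooth on the cut locus of $x_0$, and $t\mapsto d_t(x_0,x)$ is in general only locally Lipschitz, so none of the derivatives above exist classically. I would dispose of this exactly as in the barrier-sense proof of the Laplacian comparison theorem. Spatially, a smooth upper support for $d_{t_0}(x_0,\cdot)$ near $x$ is provided by the broken distance $y\mapsto d_{t_0}(x_0,\gamma(\sigma_0))+d_{t_0}(\gamma(\sigma_0),y)$ based at an interior point $\gamma(\sigma_0)$, $\sigma_0>0$ small, of the minimal geodesic, for which $x$ is not a cut point; this legitimizes the index-form inequality in the barrier sense. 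Temporally, the fixed curve $\gamma$ furnishes the required one-sided control of $\partial_t d$. Non-uniqueness of the minimal geodesic is immaterial, since any single choice produces a valid upper barrier, so the two displayed inequalities hold in the stated sense and the lemma follows.
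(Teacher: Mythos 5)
Your proof is correct, but note that the paper does not prove this lemma at all: it is quoted verbatim as Lemma 8.3(a) of Perelman's entropy paper \cite{arxiv3}, so the only meaningful comparison is with the original source. Your argument --- the first variation $\partial_t L_t(\gamma)=-\int_0^\ell Ric(\gamma',\gamma')\,ds$ as an upper support in time, the index form with the cutoff $\psi(s)=\min(s/r_0,1)$ giving $\Delta d\leq\int_0^\ell\left((n-1)|\psi'|^2-\psi^2 Ric(\gamma',\gamma')\right)ds$ so that the curvature hypothesis is only used on $[0,r_0)$ where $\gamma(s)\in B_{t_0}(x_0,r_0)$, the computation $\int_0^{r_0}(1-s^2/r_0^2)\,ds=\tfrac{2}{3}r_0$, and the broken-distance barrier at an interior point $\gamma(\sigma_0)$ (which is legitimate since $x$ is neither a cut nor conjugate point of $\gamma(\sigma_0)$, the segment being interior to a longer minimizer) --- is exactly the canonical proof of that lemma, carried out correctly.
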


\begin{thm}
    Let $M^n$ be an asymptotically flat manifold with a metric $g\in C^0\bigcap W^{1, p}_{-\tau}$, $p>n$, $\tau>\frac{n-2}{2}$. If the scalar curvature distribution $\ll R_g, u\gg\geq\kappa$, for any compactly supported nonnegative smooth function $u$ and some constant $\kappa\geq0$, then for some $\beta\in(0, \frac{1}{2})$, $g$ has scalar curvature bounded below by $\kappa$ in the sense of Ricci flow at infinity.
\end{thm}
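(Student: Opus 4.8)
The plan is to prove that for every $x\in M\setminus K$ whose image $x_0=\Phi(x)$ is sufficiently far out, so that $\epsilon=c_1|x_0|_\delta^{-\tau}<1$ and the glued metric $g_0$ of Section 4 is a genuine small $W^{1,p}$ perturbation of $\delta$, the Ricci-DeTurck flow $(g_t)_{t>0}$ for $g_0$ satisfies the $\beta$-weak bound \eqref{C0scalar1} with constant $\kappa$. Since $R(g_t)$ is smooth for $t>0$ and evolves by \eqref{sc} with the manifestly nonnegative reaction term $2|Ric|^2$, the same heat-kernel representation used in the proof of Theorem \ref{Rt0} gives, for $0<s<t$ and any $x\in\mathbb{R}^n$,
\[ R(x,t)\geq\int_{\mathbb{R}^n}\Phi^{RD}(x,t;y,s)R(y,s)\,d\mu_{g_s}(y). \]
The idea is to send $s\to0$ on the right and feed in the distributional lower bound for $g_0$; unlike Theorem \ref{Rt0}, no iteration is needed, since here the weak bound sits at the initial time and we propagate it forward a single step.

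First I would convert the hypothesis into an approximate weak bound for $g_0$. For nonnegative $\phi\in C_0^\infty(B_\delta(x_0,r_1+1))$ the isometry $(M\setminus K,g)\cong(\mathbb{R}^n\setminus\overline{B(0,1)},\Phi_*g)$ gives $\ll R_{\Phi_*g},\phi\gg=\ll R_g,u\gg\geq\kappa\int\phi\,d\mu_{\Phi_*g}$ with $u=\phi\circ\Phi$, and Lemma \ref{g00} then yields
\[ \ll R_{g_0},\phi\gg\geq\kappa\int_{\mathbb{R}^n}\phi\,d\mu_{g_0}-c_3\epsilon, \]
with $c_3$ controlled by $\int_{\mathrm{supp}(\nabla\chi)}(|\partial\phi|+\phi)\,d\mu_{g_0}$. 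The crucial structural point is that $R_{g_0}$, as a distribution, is supported in $\mathrm{supp}(g_0-\delta)\subset B_\delta(x_0,r_1+1)$, so this inequality constrains $g_0$ only near $x_0$, while far away $g_0=\delta$ and $R_{g_0}=0$.

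Next, fixing $t>0$ and $x\in B_\delta(x_0,Ct^\beta)$, I would pass to the limit $s\to0$ in the representation. For each fixed $s>0$ the metric $g_s$ is smooth, so $\int\Phi^{RD}(x,t;y,s)R(y,s)\,d\mu_{g_s}(y)$ equals the distributional pairing $\ll R_{g_s},\Phi^{RD}(x,t;\cdot,s)\gg$, after truncating the non-compactly supported weight and using \eqref{kernel} together with the decay of $g_s-\delta$ from \eqref{gt}--\eqref{dgt} to justify the integration by parts. Writing this pairing in the divergence form of Definition \ref{scalar2}, which involves only $\nabla g_s$ rather than the pointwise curvature, I would invoke the Chu-Lee convergence $g_s\to g_0$ in $W^{1,n}_{loc}$: the linear term $V(g_s)$ converges in $L^n_{loc}$ and the quadratic term $F(g_s)$ in $L^{n/2}_{loc}\subset L^1_{loc}$, while $\Phi^{RD}(x,t;\cdot,s)\to\psi:=\lim_{s\to0}\Phi^{RD}(x,t;\cdot,s)\geq0$ in $C^1_{loc}$ with uniform Gaussian decay. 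This is precisely what absorbs the $t^{-1}$-type blow-up of the pointwise scalar curvature near $s=0$, giving $R(x,t)\geq\ll R_{g_0},\psi\gg$. Applying the weak bound to a cutoff $\tilde\chi\psi$ with $\tilde\chi\equiv1$ on $B_\delta(x_0,r_1+1)$, using $\ll R_{g_0},(1-\tilde\chi)\psi\gg=0$, the conservation $\int\psi\,d\mu_{g_0}=\lim_{s\to0}\int\Phi^{RD}(x,t;\cdot,s)\,d\mu_{g_s}=1$ from \eqref{int}, and the Gaussian smallness of $\psi,\partial\psi$ on $\mathrm{supp}(\nabla\chi)$ (which lies at Euclidean distance $\geq r_1-Ct^\beta$ from $x$), I would obtain $R(x,t)\geq\kappa-E(t)$ with $E(t)\to0$ as $t\to0$, uniformly in $x\in B_\delta(x_0,Ct^\beta)$. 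Taking the infimum over the ball and then $\liminf_{t\to0}$ and $\inf_{C>0}$ yields $R_{C^0_\beta}(g)(x)\geq\kappa$. Here Perelman's estimate (Lemma \ref{Pel}) enters to control distance distortion along the flow, so that the intrinsic Gaussian bounds underlying \eqref{kernel} localize in Euclidean balls and the contribution of the far region (where $g_0=\delta$, $R=0$, which is $<\kappa$ when $\kappa>0$) is genuinely negligible.

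The main obstacle I expect is the limit $s\to0$ in $\ll R_{g_s},\Phi^{RD}(x,t;\cdot,s)\gg$: one must simultaneously handle a non-compact, $s$-dependent weight, justify the integration by parts turning the pointwise curvature into the first-order divergence expression of Definition \ref{scalar2}, and control the tails through \eqref{kernel}. The $W^{1,p}$ ($p>n$) regularity and the Chu-Lee $W^{1,n}_{loc}$ convergence are exactly what force the quadratic gradient terms in $F$ to converge, so the whole argument hinges on that convergence; verifying the uniform-in-$s$ integrability needed to exchange limit and integral against the spreading heat kernel is the delicate point.
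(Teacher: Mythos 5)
Your proposal pursues the same underlying strategy as the paper --- propagate the distributional lower bound at $t=0$ forward to positive time through the (conjugate) heat kernel of $\partial_t-\Delta_{g(t)}+\nabla_X^{g(t)}$, using the Chu--Lee $W^{1,n}_{loc}$ convergence $g_s\to g_0$ to identify the initial-time limit with $\ll R_{g_0},\cdot\gg$ and Gaussian localization to confine everything to $B_\delta(x_0,Ct^\beta)$ --- but the implementation is genuinely different. The paper does not pair $R(\cdot,s)$ against the kernel $\Phi^{RD}(x,t;\cdot,s)$ itself and send $s\to0$; instead it fixes a terminal test function $\varphi\in C_0^\infty(B_\delta(x_0,CT^\beta))$, flows it backward by the conjugate heat equation \eqref{hk}, inserts an explicit space cutoff $\psi_t=\phi(t^\gamma d_t(\cdot,x_0))$ whose evolution is controlled via Perelman's distance estimate (Lemma \ref{Pel}), and runs a Gronwall argument on the energy $E(t)=\int (R-\kappa)_-\,\varphi_t\psi_t\,d\mu_t$, with the cutoff error terms absorbed by the Davies double-integral bound \eqref{hkd}. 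Your single-step Duhamel representation is cleaner and avoids the $(R-\kappa)_-$ bookkeeping, and your finer accounting of the Lemma \ref{g00} gluing error (it is supported in the annulus where $\chi\notin\{0,1\}$, where the kernel is exponentially small) would in principle give a pointwise bound at each sufficiently distant $x$ rather than only ``at infinity''; the paper settles for the crude bound $\lim_{t\to0}E(t)\le c_3\epsilon=O(r^{-\tau})$ and must couple $T=r^{-\lambda}$, $r\to\infty$ to recover $\kappa$.

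The price of your route is concentrated in the step you yourself flag: the limit $s\to0$ of $\ll R_{g_s},\Phi^{RD}(x,t;\cdot,s)\gg$. This requires (i) existence and $C^1_{loc}$ convergence of $\psi=\lim_{s\to0}\Phi^{RD}(x,t;\cdot,s)$, i.e.\ regularity of the adjoint kernel down to the initial time when the coefficients are only $C^0\cap W^{1,p}$ there, and (ii) Gaussian upper bounds on $\psi$ \emph{and} $\nabla\psi$ on the annulus $\mathrm{supp}\{\nabla\chi\}$, uniform in $s$, since the error in Lemma \ref{g00} is measured by $\int_{\mathrm{supp}\{I\}}(|\partial\phi|+\phi)$ and so the gradient of the test function must also be exponentially small there. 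Neither is supplied by \eqref{kernel} (an integrated, not pointwise, bound) nor by \eqref{gt}--\eqref{dgt}, and the paper's cutoff-plus-Davies machinery is precisely the device that substitutes for these pointwise kernel estimates. Until (i)--(ii) are established, your argument is a plausible but incomplete alternative; note also that if the annulus localization were to fail you would still recover the paper's weaker ``at infinity'' statement by letting $r\to\infty$ as the paper does.
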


\begin{proof}
    Let $C>0$ be any constant and  some $\beta\in(0, \frac{1}{2})$.
    Denote $\Phi(x)=x_0\in\mathbb{R}^n$ and $|x_0|_{\delta}=r$.
    Let $U_1=B_{\delta}(x_0, CT^{\beta})\subset\mathbb{R}^n$ be a subset with $0<T<T_0<1$. 
     Let $\Phi^{RD}(x,t;y,s)$ be the scalar heat kernel for the operator $\partial_t-\Delta_{g(t)}+\nabla_X^{g(t)}$  with Ricci-DeTurck flow background as in Theorem \ref{Rt0}.
     For any nonnegative smooth compactly supported function $\varphi\in C^{\infty}_0(U_1)$, consider the conjugate heat equation with Ricci-DeTurck flow background satisfying
\begin{equation}\label{hk}
  \begin{cases}
    \partial_t \varphi_t(x)= -\Delta_{g(t)}\varphi_t(x)+R(x,t)\varphi_t(x)-\nabla_X^{g(t)}\varphi_t(x), & \ \ \ \mathbb{R}^n\times(0, T],  \\
    \varphi_T(x)= \varphi(x), &\ \ \ \text{on} \ \mathbb{R}^n.
  \end{cases}
\end{equation}

Then $\varphi_t(x)$ can be expressed as
\begin{equation}\label{varphi}
  \varphi_t(x)=\int_{\mathbb{R}^n}\Phi^{RD}(y, T; x, t)\varphi(y)d\mu_T(y),
\end{equation}
where $d\mu_T=d\mu_{g(T)}$. 
We see that $\varphi_t$ is nonnegative but not compactly supported anymore.
Thus, we need a cutoff function.
We will use the approach of Huang-Lee in \cite{MR4517268}.
Let $\phi: [0, \infty)\rightarrow [0,1]$ be a smooth cutoff function with
\begin{equation*}
  \phi=\begin{cases}
    1, & \mbox{on } \ \ \ [0,\frac{1}{2}] \\
    0, & \mbox{on}\ \ \ \ [1, \infty),
  \end{cases}
\end{equation*}
and satisfying $|\phi'|\leq c_4$, $\phi'\leq0$, $\phi''\geq -c_4\phi$.
From the construction of $g_0$, we know $||g_0-\delta||_{C^0(\mathbb{R}^n)}<c_1r^{-\tau}=\epsilon<1$ and by \eqref{gt}\eqref{dgt}, we have, for $t>0$,
\begin{equation}\label{glip}
  ||g(t)-\delta||_{C^0(\mathbb{R}^n)}\leq c_2(n)\epsilon,
\end{equation}
\begin{equation}\label{Rmm}
 |Rm(g(t))|\leq \frac{c_3\epsilon}{t}.
\end{equation}

We define
\begin{equation*}
  \psi_t(x)=\phi\left(t^{\gamma}d_t(x, x_0)\right),
\end{equation*}
where $d_t(x, x_0)$ is the distance function to $x_0$ with respect to $g(t)$ and $\gamma>0$ is a constant.

Then, for $t\in(0, T_0]$, we have
\[ \psi_t(x)\equiv1 \]
on $B_{\delta}(x_0, \frac{1}{2(1+c_2\epsilon)t^{\gamma}})$. 
And, outside $B_{\delta}(x_0, \frac{1}{(1-c_2\epsilon)t^{\gamma}})$, we have $\psi_t\equiv0$.

Noting that $g(t)$ is the Ricci-DeTurck flow which differs from the Ricci flow by a diffeomorphism generated by $X$. More presicely, let $(\Phi_t)_{t>0}$ be a flow of time-dependent family of vector fields $X$, such that
\[ \partial_t\Phi_t(x)=X(\Phi_t(x)). \]
Then $\tilde{g}_t(x)=\Phi^*_tg_t(\Phi_t(x))$ satisfies the Ricci flow.
So $d_t(x)=\tilde{d}_t(\Phi_t^{-1}(x))$ and 
\[ \partial_t\tilde{d}_t(y)=\partial_td_t(x)+\nabla_X^{g(t)}d_t(x) \]
with $y=\Phi_t^{-1}(x)$ and $y_0=\Phi_t^{-1}(x_0)$.

By taking $r_0=\frac{1}{2t^{\gamma}}$, $t_0=t\in (0, T]$ in Lemma \ref{Pel}, from \eqref{Rmm}, there holds
\begin{equation}\label{cuto}
     (\partial_t-\Delta)\tilde{d}_t(y, y_0)\geq  -(n-1)\left( \frac{2c_3\epsilon}{3t}\frac{1}{2t^{\gamma}}+2t^{\gamma} \right) 
\end{equation}
in the barrier sense when $d_t(x, x_0)\geq \frac{1}{2t^{\gamma}}$.
Thus, we can calculate
\[ \partial_t\psi_t(x)=\phi'\left[t^{\gamma}(\partial_t\tilde{d}_t-\nabla_X^{g(t)}d_t)+\gamma t^{\gamma-1}d_t\right],
\]
\[ \Delta\psi_t(x)=\phi''t^{2\gamma}+\phi't^{\gamma}\Delta \tilde{d}_t. \]
Hence, by \eqref{cuto}, we have
\begin{equation}\label{cutev}
\begin{split}
  (\partial_t-\Delta)\psi_t= & \phi't^{\gamma}\left[\partial_t\tilde{d}_t-\Delta \tilde{d}_t+\frac{\gamma d_t}{t}\right]-\phi''t^{2\gamma}-\nabla_X^{g(t)}\psi_t(x) \\
    \leq & c_4t^{2\gamma}\psi_t+c_4t^{\gamma}(n-1)\left( \frac{2c_3\epsilon}{3t}\frac{1}{2t^{\gamma}}+2t^{\gamma} \right)-\nabla_X^{g(t)}\psi_t(x)\\
    \leq & c_4t^{2\gamma}\psi_t+\frac{c_5}{t}-\nabla_X^{g(t)}\psi_t(x),
\end{split}
\end{equation}
where $c_5$ depends on $n, c_3, c_4, \epsilon, \gamma$.

Denote $f(x, t)=(R-\kappa)_-$, then
\begin{equation}\label{fev}
  (\partial_t-\Delta)f=(\partial_t-\Delta)(-R)=-\nabla_X^{g(t)}f-2|Ric|^2.
\end{equation}

Consider the energy functional
\[ E(t)=\int_{\mathbb{R}^n}f\varphi_t\psi_td\mu_t. \]
Note that the integrand $f\varphi_t\psi_t$ is nonnegative smooth compactly supported for $t>0$.
Then, along the Ricci-DeTurck flow $g(t)$, by \eqref{hk}\eqref{cutev}\eqref{fev} and $\frac{d}{dt}d\mu_t=-(R+\text{div}X)d\mu_t$,  we have
\begin{equation}\label{enev}
  \begin{split}
    \frac{d}{dt}E(t)= & 
    \int_{\mathbb{R}^n}\left( \partial_t(f\varphi_t\psi_t)-(R+\text{div}X)f\varphi_t\psi_t \right)d\mu_t  \\
      = & \int_{\mathbb{R}^n}[(\Delta f-\nabla_X^{g(t)}f-2|Ric|^2)\varphi_t\psi_t+f(-\Delta\varphi_t+R\varphi_t-\nabla_X^{g(t)}\varphi_t)\psi_t
      +f\varphi_t(\partial_t-\Delta)\psi_t \\
      & +f\varphi_t\Delta\psi_t-(R+\text{div}X)f\varphi_t\psi_t ]d\mu_t\\
      \leq & \int_{\mathbb{R}^n}\left[ -2\varphi_t\nabla f\nabla\psi_t+f\varphi_t\left( c_4t^{2\gamma}\psi_t+\frac{c_5}{t} \right) \right]d\mu_t\\
      \leq & c_4t^{2\gamma}E(t)+2\int_{supp\{\phi'\}}\varphi_t|\nabla f||\nabla\psi_t|d\mu_t+\frac{c_5}{t}\int_{supp\{\phi'\}}f\varphi_td\mu_t.
    \end{split}
\end{equation}

By the definition of $\psi_t$ and \eqref{dgt}, we know
\[ |\nabla\psi_t|=|\phi't^{\gamma}\nabla^{g(t)} d_t|\leq c_4t^{\gamma}, \]
\begin{equation}\label{df}
  |\nabla f|=|\nabla R|\leq\frac{c_6(n)\epsilon}{t^{\frac{3}{2}}}.
\end{equation}
Along with \eqref{enev}, there holds
\begin{equation}\label{energy}
\begin{split}
    \frac{d}{dt}E(t)\leq&  c_4t^{2\gamma}E(t)+c_6\epsilon (t^{\gamma-\frac{3}{2}}+t^{-2})\int_{supp\{\phi'\}}\varphi_td\mu_t\\
    \leq&  c_4t^{2\gamma}E(t)+c_5\epsilon t^{-2}\int_{supp\{\phi'\}}\varphi_td\mu_t,
\end{split}     
\end{equation}
by changing $c_5$ accordingly.

By the definition of the cutoff function $\phi$, we see that
\[ supp\{\phi'\}\subset U_2=B_{\delta}(x_0, \frac{1}{(1-c_2\epsilon)t^{\gamma}})\backslash B_{\delta}(x_0,  \frac{1}{2(1+c_2\epsilon)t^{\gamma}}). \]

From \eqref{varphi}, we have
\begin{equation}\label{phitg}
  \begin{split}
    \int_{supp\{\phi'\}}\varphi_td\mu_t = & \int_{U_2}\int_{U_1}\Phi^{RD}(y, T; x, t)\varphi(y)d\mu_T(y)d\mu_t(x) \\
     \leq  & ||\varphi||_{L^{\infty}(U_1)}\int_{U_2}\int_{U_1}\Phi^{RD}(y, T; x, t)d\mu_T(y)d\mu_t(x).
  \end{split}
\end{equation}
To estimate the right hand side, we will use the Davies' double integral upper bound of the heat kernel with Ricci-DeTurck flow background given by Theorem 1.2 in \cite{MR3449222}.
In our case, using \eqref{glip} and \eqref{Rmm} in the proof of Theorem 1.2 of \cite{MR3449222}, we have the following result. 

\begin{thm}
    Let $\Phi^{RD}(y, t; x, l)$ be the heat kernel with Ricci-DeTurck flow background metric $g(t)$ obtained from $g_0$.
    Let $U_1, U_2$ be two compact sets of $\mathbb{R}^n$.
    Then, for $0<t<T<T_0$, we have
    \begin{equation}\label{hkd}
    \begin{split}
    \int_{U_2}\int_{U_1}\Phi^{RD}(y, T; x, t)d\mu_T(y)d\mu_t(x)
    \leq& \left(\frac{T}{t}\right)^{\frac{c_3\epsilon}{2}}
  \exp\left(-\frac{d_{\delta}^2(U_1, U_2)}{2(1+c_2\epsilon)^2(T-t)}\right)\\
  &\cdot\Vol_T(U_1)^{\frac{1}{2}}\Vol_t(U_2)^{\frac{1}{2}},
 \end{split}
 \end{equation}
where $d_{\delta}$ means the Eculidean distance.
\end{thm}

Noting that
\[ \Vol_T(U_1)\leq(1+c_2\epsilon)^{\frac{n}{2}}(CT^{\beta})^{n}, \]
\[ \Vol_t(U_2)\leq(1+c_2\epsilon)^{\frac{n}{2}}(2t^{-\gamma})^{n}, \]
\[ d_{\delta}(U_1, U_2)=((1-c_2\epsilon)t^{\gamma})^{-1}-CT^{\beta}>t^{-\gamma}, \]
inserting these into \eqref{phitg} arises
\begin{equation}\label{dtg}
  \int_{supp\{\phi'\}}\varphi_td\mu_t\leq  c_7T^{\frac{c_3\epsilon+\beta n}{2}}t^{\frac{-\gamma n-c_3\epsilon}{2}}\exp(-c_5t^{-2\gamma}T^{-1}),
\end{equation}
where $c_7$ depends on $\|\varphi\|_{L^{\infty}(U_1)}$ and $c_5$.
We obtain by \eqref{energy},
\begin{equation}\label{energy1}
\begin{split}
    \frac{d}{dt}E(t)
     \leq& c_4t^{2\gamma}E(t)+c_7\epsilon T^{\frac{c_3\epsilon+\beta n}{2}}t^{\frac{-\gamma n-c_3\epsilon}{2}-2}\exp(-c_5t^{-2\gamma}T^{-1})\\
    \leq& c_4t^{2\gamma}E(t)+c_7\epsilon T^{\frac{c_3\epsilon+\beta n}{2}}t^{\frac{-\gamma n-c_3\epsilon}{2}-2}(c_5t^{-2\gamma}T^{-1})^{-\eta}\\
    \leq& c_4t^{2\gamma}E(t)+c_7\epsilon T^{\frac{c_3\epsilon+\beta n}{2}+\eta}t^{\frac{-\gamma n-c_3\epsilon}{2}-2+2\gamma\eta},
\end{split}     
\end{equation}
for any $\eta>\frac{1}{2\gamma}(\frac{n\gamma+c_3\epsilon}{2}+1)>1$.

Integrate \eqref{energy1} over $[t, T]$, we obtain that
\begin{equation}\label{et}   
 E(T)\leq e^{\frac{c_4}{1+2\gamma}T^{1+2\gamma}}(E(t)+c_7\epsilon T^{\frac{\beta n-\gamma n}{2}+\eta+2\gamma\eta-1}). 
\end{equation}
Since $\ll R_g, u\gg\geq\kappa$, i.e. $\ll (R_g-\kappa)_-, u\gg=0$,
by Lemma \ref{g00}, we see that  
\[ \ll(R_{g_0}-\kappa)_-, u\gg\leq c_3\epsilon, \]
for any $u\in C_0^{\infty}(\mathbb{R}^n)$.
By construction of $g_0$, we know that $g_0=\delta$ outside $B_{\delta}(x_0, r_1+1)$. Hence, by Lemma \ref{1n}, we obtain that for any $u\in C^{\infty}(\mathbb{R}^n)$,
\[ \lim_{t\rightarrow0}\ll R(t), u\gg=\ll R_{g_0}, u\gg. \]

Therefore, we have
\begin{equation*}
    \lim_{t\rightarrow0} E(t)=\lim_{t\rightarrow0}\ll (R(t)-\kappa)_-, \varphi_t\psi_t\gg
    =\ll (R_{g_0}-\kappa)_-, u\gg\leq c_3\epsilon,
\end{equation*}
for $u=\lim_{t\rightarrow0}\varphi_t$. From \eqref{et}, we get
\[ E(T)\leq e^{\frac{c_4}{1+2\gamma}T^{1+2\gamma}}(c_3\epsilon+c_7\epsilon T^{\frac{\beta n-\gamma n}{2}+\eta+2\gamma\eta-1}). \]
Let $T=r^{-\lambda}$, $\lambda>0$, by $\epsilon=c_1r^{-\tau}$, there holds
\[ E(T)\leq c_7r^{-\tau}. \]
By definition of $E(t)$, we have for any $\varphi\in C_0^{\infty}(B_{\delta}(x_0, CT^{\beta}))$,
\[ \int_{\mathbb{R}^n}(R(T)-\kappa)_-\varphi(x)d\mu_T(x)\leq c_7r^{-\tau}. \]
Then for any $t=r^{-\lambda}$, in $B_{\delta}(x_0, Ct^{\beta})$, we have
\[ R(t)-\kappa\geq -(R(t)-\kappa)_-\geq -c_7r^{-\tau}, \]
which implies
\[ \inf_{C>0}\lim_{t\rightarrow0}\liminf_{B_{\delta}(x_0, Ct^{\beta})}R(t)\geq\kappa-\lim_{t\rightarrow0}c_7r^{-\tau}=\kappa. \]
So we proved $R_{C^0_{\beta}}(g_0)\geq\kappa$ at infinity.
\end{proof}

\begin{remark}
From the proof, we see that the scalar curvature distribution is in local case and the scalar curvature lower bound in the Ricci flow sense is pointwise. So we can only prove the two lower bounds of the scalar curvatue conincide at infinity.

    For $g\in W^{1,p}_{-\tau}$, we have better estimates of the curvature as in \cite{MR4596043}, but we do not need it in the proof. We state as the following.
\end{remark}

\begin{thm}\label{W1p}
  Let $g_0\in W^{1,p}_{-\tau}$, $p>n$ be a metric on $\mathbb{R}^n$.
  Suppose $\int_{B_{\delta}(x, 1)}|\partial g_0|^pd\mu_{\delta}\leq A$ for any $x\in\mathbb{R}^n$, then there exists a $T_0=T_0(n, A, p)$, such that along the Ricci-DeTurck flow $g(t)$ in Theorem \ref{RDF}, for $t\in(0, T_0]$, we have
  \begin{enumerate}
    \item $\int_{B_{\delta}(x, 1)}|\partial g(t)|^pd\mu_{\delta}\leq C(n, p)A$,
    \item $|\partial g(t)|\leq\frac{C(n, p)(A+1)}{t^{\frac{n}{2p}}}$,
    \item $|\partial^2 g(t)|\leq\frac{C(n, p)(A+1)}{t^{\frac{n}{4p}+\frac{3}{4}}}$,
  \end{enumerate}
  where $C(n, A, p)$ is a positive constant.
\end{thm}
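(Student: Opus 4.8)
The plan is to write $h=g(t)-\delta$ and treat the Ricci-DeTurck flow \eqref{RD} as a quasilinear, uniformly parabolic system for $h$ of the schematic form $\partial_t h = g^{ab}\partial_a\partial_b h + g^{-1}\ast g^{-1}\ast\partial h\ast\partial h$, the quadratic term being the DeTurck/curvature nonlinearity. By Theorem \ref{RDF} the flow exists with $\|g(t)-\delta\|_{L^\infty}\leq c(n)\epsilon$, so $g^{ab}$ stays uniformly close to $\delta$, the leading operator is uniformly elliptic, and \eqref{dgt} furnishes the a priori bounds $\|\partial^k h(t)\|_{L^\infty}\leq c_k(n)\epsilon\, t^{-k/2}$ that I will use as a safety net. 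I would then establish the three conclusions in the order (1)$\to$(2)$\to$(3), each feeding the next; note that all three improve on \eqref{dgt} precisely because $p>n$. For (1) I would run a localized $L^p$ energy estimate: fixing $x$ and a cutoff $\phi\equiv 1$ on $B_\delta(x,1)$ with $\mathrm{supp}\,\phi\subset B_\delta(x,2)$, I differentiate $\int|\partial h|^p\phi\, d\mu_{g(t)}$ in time. The leading second-order term yields, after integration by parts, a good negative contribution $-\int|\partial h|^{p-2}|\partial^2 h|^2\phi$, while the cutoff errors and the quadratic nonlinearity are absorbed by Young's inequality into lower-order terms $\int|\partial h|^p\phi$, the $C^0$-smallness of $h$ keeping all coefficients controlled. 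A Gronwall argument on $[0,T_0]$ with $T_0=T_0(n,A,p)$ then gives $\int_{B_\delta(x,1)}|\partial g(t)|^p\, d\mu_\delta\leq C(n,p)A$, uniformly in the base point $x$.

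For (2) I would upgrade the spatial $L^p$ bound to a pointwise bound by parabolic $L^p$--$L^\infty$ smoothing. Viewing the equation for a component $v=\partial h$ as a linear parabolic equation $\partial_t v = g^{ab}\partial_a\partial_b v + b\cdot\partial v + cv$ whose drift and zeroth-order coefficients are controlled by $\partial g$, I would run Moser iteration on the parabolic cylinder $B_\delta(x,\sqrt{t})\times(t/2,t)$. Since $T_0<1$ forces $\sqrt{t}<1$, part (1) bounds the spatial $L^p$ norm of $\partial g$ on this cylinder uniformly in time, and the supercriticality $p>n$ is exactly what makes the $L^p$ drift and coefficient terms subordinate in the iteration. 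Tracking the parabolic scaling — equivalently, the heat-kernel bound $\|e^{s\Delta}\|_{L^p\to L^\infty}\sim s^{-n/(2p)}$ with $s\sim t$ — produces $|\partial g(t)|\leq C(n,p)(A+1)\, t^{-n/(2p)}$, which is conclusion (2).

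Conclusion (3) is then cheap: I would interpolate the gradient bound just obtained against the third-derivative bound from \eqref{dgt}. The Landau--Kolmogorov (Gagliardo--Nirenberg) inequality for the middle derivative gives $\|\partial^2 h(t)\|_{L^\infty}\leq C\|\partial h(t)\|_{L^\infty}^{1/2}\|\partial^3 h(t)\|_{L^\infty}^{1/2}$, and feeding in (2) together with $\|\partial^3 h(t)\|_{L^\infty}\leq c_3(n)\epsilon\, t^{-3/2}$ from \eqref{dgt} (and $\epsilon<1$) yields $|\partial^2 g(t)|\leq C(n,p)(A+1)\, t^{-n/(4p)-3/4}$, since $\tfrac12\cdot\tfrac{n}{2p}+\tfrac12\cdot\tfrac32=\tfrac{n}{4p}+\tfrac34$. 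This is exactly conclusion (3), and it also explains why that exponent is suboptimal compared with a direct second-order smoothing argument: it is bought by interpolation rather than by a further Moser step.

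The main obstacle is step (2). The equation for $\partial h$ is genuinely quasilinear: its coefficients are themselves first derivatives of $g$, controlled only in $L^p$ rather than $L^\infty$, so the Moser iteration cannot be quoted off the shelf. Closing it requires simultaneously exploiting the uniform ellipticity coming from the $C^0$-smallness in \eqref{gt} and the supercritical integrability $p>n$ to treat the $\partial g$ terms as subordinate perturbations, while keeping every constant and the final time $T_0$ independent of the base point $x\in\mathbb{R}^n$. Step (1) carries the same difficulty in milder form, namely absorbing the quadratic gradient nonlinearity into the good second-order term. These are precisely the estimates carried out on compact manifolds in \cite{MR4596043}, and the only new point is to make them local and uniform in $x$ on $\mathbb{R}^n$.
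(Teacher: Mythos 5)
The paper does not actually prove Theorem \ref{W1p}: it is quoted from \cite{MR4596043}, and the surrounding remark explicitly notes the estimates are not needed for the main results, so there is no in-paper argument to compare against line by line. Your three-step plan --- a localized $L^p$ energy estimate plus Gronwall, then parabolic $L^p$-to-$L^\infty$ smoothing with exponent $t^{-n/(2p)}$, then interpolating the second derivative between conclusion (2) and the $k=3$ case of \eqref{dgt} --- is the standard route of that reference, and the interpolation does reproduce the stated exponent exactly, since $\tfrac12\cdot\tfrac{n}{2p}+\tfrac12\cdot\tfrac32=\tfrac{n}{4p}+\tfrac34$; your observation that this exponent is an artifact of interpolation rather than of a second smoothing step is correct and consistent with the paper's remark that the $\epsilon$ is ``lost'' in the constant.

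The one place where your argument, as written, would fail is the absorption in step (1). Differentiating $\int|\partial h|^p\phi\,d\mu$ produces, besides the good term $-c\int|\partial h|^{p-2}|\partial^2h|^2\phi$, a cubic contribution of the type $\int|\partial h|^{p}|\partial^2h|\,\phi$ coming from $\partial g^{ab}\,\partial_a\partial_b h$ and from differentiating the quadratic nonlinearity. After Young's inequality this becomes $\int|\partial h|^{p+2}\phi$, which is \emph{not} of lower order than $\int|\partial h|^p\phi$ and cannot be absorbed there. Nor does the ``safety net'' \eqref{dgt} close it: bounding it by $\|\partial h\|_{L^\infty}^2\int|\partial h|^p\phi$ gives a Gronwall coefficient $\sim\epsilon^2t^{-1}$, whose integral diverges as the initial time tends to $0$. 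What is actually needed is a Ladyzhenskaya/Gagliardo--Nirenberg interpolation applied to $u=|\partial h|^{p/2}$, of the form $\int u^{2(p+2)/p}\phi\leq\delta\int|\nabla u|^2\phi+C(\delta)\bigl(\int u^2\phi\bigr)^{\sigma}$ with $\sigma>1$, which is subcritical precisely because $p\geq n$ and lets the bad term be absorbed into the good gradient term; the resulting superlinear ODE for the energy is exactly where $T_0=T_0(n,A,p)$ acquires its dependence on $A$. With that replacement (and the corresponding care in the Moser iteration of step (2), where the drift lies only in $L^\infty_tL^p_x$ with $n/p<1$), your outline matches the argument of \cite{MR4596043} and its localization in \cite{arxiv2}.
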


Noting that from Theorem \ref{W1p} (3), we have the curvature bounded by $Ct^{-\alpha}$, where $\alpha=\frac{n}{4p}+\frac{3}{4}$. 
Comparing this with \eqref{Rmm}, we lost the $\epsilon$ in the constant.

\hspace{0.4cm}

\bibliography{mass}

\end{document}